\documentclass{article}
\usepackage[a4paper,textwidth=15cm,textheight=21.5cm,centering]{geometry}
\usepackage[T1]{fontenc}
\usepackage[utf8]{inputenc} 
\usepackage{lmodern}
\usepackage{mathtools}   
\usepackage{amssymb, amsfonts}
\usepackage{amsthm}
\usepackage{bm}
\usepackage{graphicx}
\usepackage{booktabs, dcolumn}
\usepackage[tableposition=top]{caption}
\usepackage{tikz}
\usetikzlibrary{arrows.meta,calc,positioning}
\usepackage{enumerate}
\usepackage[shortlabels]{enumitem}
\usepackage{xcolor}
\usepackage{hyperref}
\hypersetup{
    colorlinks=true,
    citecolor=blue,
    linkcolor=blue,
    filecolor=magenta,      
    urlcolor=cyan,
}
\usepackage{bookmark}   
\numberwithin{equation}{section}
\theoremstyle{plain}
\newtheorem{theorem}{Theorem}[section]
\newtheorem{proposition}[theorem]{Proposition}
\newtheorem{lemma}[theorem]{Lemma}
\newtheorem{corollary}[theorem]{Corollary}

\newtheorem{claim}[theorem]{Claim}

\theoremstyle{definition}
\newtheorem{definition}[theorem]{Definition}

\theoremstyle{remark}

\newcommand{\Om}{\Omega}

\newcommand{\R}{\mathbb{R}}

\newcommand{\G}{\Gamma}

\newcommand{\sph}{\mathbb{S}}
\newcommand{\sub}{\subset}

\newcommand{\Ric}{\operatorname{Ric}}
\newcommand{\Bry}{\operatorname{Bry}}
\begin{document}
\title{$\kappa$-solutions with the round cylinder as an asymptotic shrinker}
\author{Aprameya Girish Hebbar\thanks{Department of Mathematics, Rutgers University, Piscataway, NJ 08854. Email: \texttt{ah1531@math.rutgers.edu}.}}

\date{}
\maketitle
\begin{abstract}
We show that $\kappa$-solutions to the Ricci flow in dimensions $n\ge 4$ whose asymptotic shrinking Ricci soliton is the round cylinder $\mathbb S^{n-1}\times\mathbb R$ must be uniformly PIC. Combined with earlier classification results, this implies that any such noncompact solution is either the round shrinking cylinder or the Bryant steady soliton, and any such compact solution is Perelman's ancient solution. 
\end{abstract}
\section{Introduction}
The Ricci flow, introduced by Hamilton \cite{Ham82}, evolves a Riemannian metric by
$$
\partial_t g(t)=-2\,\Ric_{g(t)}.
$$
It has become one of the central tools in geometric analysis and topology, most notably through Perelman's resolution of the Poincar\'e and Geometrization conjectures \cite{Per02,Per03}. A fundamental problem in the theory is to understand the formation of singularities. Such singularities are analyzed by rescaling the flow around regions of high curvature, and the possible blow-up limits are ancient solutions which are noncollapsed. Among these, Perelman's $\kappa$-solutions are of particular importance.
\begin{definition}
An ancient solution $(M^n,g(t))_{t\leq 0}$ of the Ricci flow is called a $\kappa$-solution if it is complete and nonflat, has bounded nonnegative curvature operator, and is $\kappa$-noncollapsed at all scales in the sense of Perelman.
\end{definition}
In dimension $3$, the $\kappa$-solutions have been completely classified. Brendle proved that any noncompact three-dimensional $\kappa$-solution is isometric to either a family of round shrinking cylinders, a quotient thereof, or the Bryant soliton \cite{Bre20}. In the compact case, Brendle--Daskalopoulos--\v{S}e\v{s}um proved that any compact three-dimensional $\kappa$-solution on $\mathbb S^3$ is either the family of round shrinking spheres or Perelman's ancient solution \cite{BDS21}.

In higher dimensions, the situation is substantially more delicate, and a general classification of $\kappa$-solutions in this sense remains a major open problem. We refer the reader to Haslhofer \cite{Has24} for recent progress in dimension four. Under the additional assumption of uniformly positive isotropic curvature (PIC), however, classification results are available. Indeed, since nonnegative curvature operator implies weakly PIC2, the classification theorems of Brendle--Naff and Li--Zhang apply to the noncompact uniformly PIC $\kappa$-solutions considered here: every such solution is either the round shrinking cylinder, a quotient thereof, or the Bryant soliton \cite{LZ22,BN23}. In the compact case, Brendle--Daskalopoulos--Naff--\v{S}e\v{s}um proved that any uniformly PIC compact $\kappa$-solution diffeomorphic to $\mathbb S^n$ is rotationally symmetric and is either the family of shrinking round spheres or Perelman's ancient solution \cite{BDSN23}.

In this paper we study $n$-dimensional $\kappa$-solutions under the additional assumption that Perelman's asymptotic shrinker is the standard round cylinder $\mathbb S^{n-1}\times \mathbb R$. Li--Zhang \cite{LZ22} studied such solutions and established compactness, cap-neck decomposition, and the classification of the rotationally symmetric examples. The purpose of the present paper is to remove the rotational symmetry assumption and treat both the compact and noncompact cases. Our main result is the following.
\begin{theorem}
\label{thm:main-result}
Let $(M^n,g(t))_{t\leq 0}$, $n\geq 4$, be a $\kappa$-solution whose asymptotic shrinker is the standard round cylinder $\mathbb S^{n-1}\times\mathbb R$. Then there exist constants $\Lambda>0$ and $b<0$ such that the flow $(M^n,g(t))_{t\leq b}$ is uniformly $\Lambda$-PIC.
\end{theorem}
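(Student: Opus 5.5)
We argue by contradiction combined with a compactness argument, using the cap-neck structure of Li--Zhang \cite{LZ22}. Uniform $\Lambda$-PIC means that for every $x$ and $t\le b$ the curvature tensor satisfies $R - \Lambda^{-1}\mathrm{scal}\,\mathrm{id}\wedge\mathrm{id}\in$ PIC. Equivalently, after normalizing by the scalar curvature, the curvature stays a definite distance inside the PIC cone. Since the curvature operator is nonnegative, the curvature is weakly PIC2 everywhere. So the task is to show that the normalized curvature $R/\mathrm{scal}$ stays uniformly away from the boundary of the PIC cone for all sufficiently negative times.

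\textbf{Step 1: pointwise structure from the asymptotic shrinker.} First we show that the cylindrical asymptotic shrinker forces every point at sufficiently negative times to be either a neck point or a cap point. Suppose not, and pick $(x_k,t_k)$ with $t_k\to-\infty$. We rescale by $\mathrm{scal}(x_k,t_k)$ and take a limit using Perelman's compactness theorem for $\kappa$-solutions. The limit is again a $\kappa$-solution, and by a standard diagonal/monotonicity argument with Perelman's reduced volume its asymptotic shrinker is again $\mathbb S^{n-1}\times\R$ or a quotient of it. Li--Zhang's cap-neck decomposition then says the limit is either a round cylinder or has a compact cap of controlled size. In particular, the limit is not a product of a lower-dimensional $\kappa$-solution with a flat factor containing $\R^2$, because such a product would have a non-cylindrical asymptotic shrinker such as $\mathbb S^{n-2}\times\R^2$. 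This product case is precisely the obstruction to PIC, since $\mathbb S^{n-2}\times\R^2$ is not uniformly PIC.

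\textbf{Step 2: the normalized curvature is uniformly inside PIC.} Suppose there is $(x_k,t_k)$ with $t_k\to-\infty$ such that the normalized curvature at $(x_k,t_k)$ approaches the boundary of the PIC cone. We rescale and pass to a limit $\kappa$-solution $(M_\infty,g_\infty)$. At the base point it is weakly PIC2 but not strictly PIC. The strong maximum principle for the Ricci flow (Brendle--Schoen, Brendle's PIC version) then splits the universal cover, giving either a flat factor $\R^2$ or a Kähler or reduced-holonomy piece.

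Each option contradicts Step 1. For nonnegative curvature operator in dimension $n\ge4$, failure of strict PIC forces the null-space of the curvature operator to contain enough 2-planes that holonomy reduction yields either a flat $\R^2$ factor or a locally symmetric/Kähler factor. A Kähler or symmetric factor has an asymptotic shrinker that is not the round cylinder. A factor $N^{n-2}\times\R^2$ has asymptotic shrinker with at least two flat directions. Both are ruled out, provided we know that the asymptotic shrinker of the blow-up limit remains the round cylinder. This is where we use that the cylinder is rigid among shrinkers: nearby shrinkers are again cylindrical, by Colding--Minicozzi-type uniqueness or simply the Li--Zhang compactness results. Compactness of $\kappa$-solutions normalized at $(x_k,t_k)$, restricted to the class with cylindrical asymptotic shrinker, then gives a uniform $\Lambda$.

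\textbf{Main obstacle.} The key difficulty is controlling the asymptotic shrinker of the blow-up limits in Step 2. Asymptotic shrinkers are not obviously continuous under pointed limits, and the base points $(x_k,t_k)$ may drift far into the cap or the neck region. I would handle this by restricting to $t\le b$ with $b$ very negative. At such times, by Perelman's construction the solution is, at scale $\sqrt{-t}$, close to the cylinder on most of the manifold. Neck points are then manifestly uniformly PIC, since $\mathbb S^{n-1}\times\R$ is. For cap points, Li--Zhang's cap-neck decomposition bounds the cap diameter relative to the neck scale. The limit of the caps is a noncompact $\kappa$-solution with one end asymptotic to the cylinder, so it has no $\R^2$ factor, and the splitting argument above applies to it. This last point is where I expect most of the work.
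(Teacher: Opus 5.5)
Your proposal is correct in its essential line, and it takes a genuinely different and much shorter route than the paper. The core of your Step~2 works as follows. At the base point of the blow-up limit, $\operatorname{Rm}(\varphi,\bar\varphi)=0$ for $\varphi=(e_1+ie_2)\wedge(e_3+ie_4)$. Since $\operatorname{Rm}\ge 0$, this puts the rank-four forms $e_1\wedge e_3-e_2\wedge e_4$ and $e_1\wedge e_4+e_2\wedge e_3$ in the kernel of the curvature operator, so the sphere alternative is excluded. Hamilton's strong maximum principle, together with the cylindrical asymptotic shrinker, then forces the limit to be the round cylinder; this is exactly Lemma~\ref{lem:strong-max-principle-line-or-positive-curvature} applied to the limit, and Brendle--Schoen's PIC version is not needed. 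The cylinder's curvature kernel $\{v\wedge\partial_s\}$ contains no rank-four form, which gives the contradiction.

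The ``main obstacle'' you describe is not an obstacle. Li--Zhang's compactness theorem (Lemma~\ref{lem:compactness-result}) already says the limit is either the round sphere or has $\mathbb S^{n-1}\times\mathbb R$ as asymptotic shrinker at every base point. So Step~1 and the neck/cap detour can be dropped, and you should cite that theorem rather than a ``standard'' reduced-volume argument or Colding--Minicozzi.

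When you write it out, fix the case analysis. It is not true that failure of strict PIC forces a flat $\mathbb R^2$ factor or a symmetric/K\"ahler factor: a product $N_1^3\times N_2^3$ of factors with positive curvature operator is not PIC. The split case $\mathbb S^{n-1}\times\mathbb R$ must also be listed. Both are handled by the fact that a split $\kappa$-solution with cylindrical shrinker is the round cylinder, which is strictly PIC.

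The paper instead separates cases. For noncompact solutions it does the following:
\begin{enumerate}
\item It uses the Type~II property to find Bryant blow-ups at $(\hat p_k,\hat t_k)$.
\item It defines the first time $t_k>\hat t_k$ at which uniform $\alpha$-PIC fails.
\item It passes the pinching on $[\hat t_k,t_k)$ to the blow-up limit at $t_k$, so that the Brendle--Naff classification identifies that limit as the Bryant soliton.
\item It contradicts the choice of $t_k$ using the propagation estimate of Proposition~\ref{prop:continuity-theta}.
\end{enumerate}
For compact solutions it shows blow-up limits are noncompact and identifies them via the noncompact classification before a compactness argument.

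That route gives explicit knowledge of the blow-up limits and a propagation result of independent interest. However, it needs the Brendle--Naff classification, the Type~II result and the steady-soliton classification just to obtain the pinching, and it must break the resulting circularity with a continuity argument. Your route needs only Lemmas~\ref{lem:compactness-result} and~\ref{lem:strong-max-principle-line-or-positive-curvature} and treats compact and noncompact solutions at once. Since it never actually uses $t_k\to-\infty$, it even yields uniform PIC on all of $(-\infty,0]$.
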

Since the flow already has nonnegative curvature operator, Theorem~\ref{thm:main-result} supplies the remaining uniform PIC condition needed to invoke the classification theorems of \cite{BN23,BDSN23}, after shifting time if necessary. We obtain the following classification.
\begin{theorem}
\label{thm:full-classification}
Let $(M^n,g(t))_{t\leq 0}$, $n\geq 4$, be a $\kappa$-solution whose asymptotic shrinker is the standard round cylinder $\mathbb S^{n-1}\times\mathbb R$. Then $(M^n,g(t))_{t\leq 0}$ is, up to parabolic rescaling and time translation, one of the following solutions:
\begin{enumerate}
    \item the family of round shrinking cylinders $\mathbb S^{n-1}\times\mathbb R$;
    \item the $n$-dimensional Bryant steady soliton;
    \item Perelman's ancient solution.
\end{enumerate}
\end{theorem}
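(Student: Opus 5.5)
The statement to prove is Theorem~\ref{thm:full-classification}. I will deduce it from Theorem~\ref{thm:main-result} together with the classification results quoted in the introduction. The work splits into the noncompact and compact cases.

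\textbf{Step 1: reduce to a uniformly PIC flow.} By Theorem~\ref{thm:main-result} there are $\Lambda>0$ and $b<0$ such that $(M^n,g(t))_{t\le b}$ is uniformly $\Lambda$-PIC. Set $\tilde g(t)=g(t+b)$ for $t\le 0$.
\begin{itemize}
\item $\tilde g$ is still a $\kappa$-solution: it is ancient, complete, nonflat, has bounded nonnegative curvature operator, and is $\kappa$-noncollapsed.
\item Since its curvature operator is nonnegative, it is weakly PIC2.
\item It is now also uniformly PIC.
\end{itemize}
Hence the hypotheses of \cite{LZ22,BN23} (noncompact case) and \cite{BDSN23} (compact case) hold for $\tilde g$.

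\textbf{Step 2: the noncompact case.} Brendle--Naff and Li--Zhang show that $\tilde g$ is the round shrinking cylinder, a quotient of it, or the Bryant soliton.
\begin{itemize}
\item A nontrivial quotient of $\mathbb S^{n-1}\times\R$ has as its asymptotic shrinker that same quotient, not the standard cylinder. For example, a $\mathbb Z$-quotient in the $\R$-direction is compact, and a quotient $(\mathbb S^{n-1}/\Gamma)\times\R$ has asymptotic shrinker $(\mathbb S^{n-1}/\Gamma)\times\R$. Either way the hypothesis is violated, so quotients are excluded.
\item The Bryant soliton's asymptotic shrinker is indeed the round cylinder, so it is allowed.
\end{itemize}
Ricci flow solutions with bounded curvature are unique backward and forward (Chen--Zhu, Kotschwar). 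The flow on $[b,0]$ is therefore determined by $\tilde g$, and the identification extends to all $t\le0$ up to time translation and parabolic rescaling.

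\textbf{Step 3: the compact case.} To apply \cite{BDSN23} we need $M$ diffeomorphic to $\mathbb S^n$. Two routes are available:
\begin{itemize}
\item Li--Zhang's cap--neck decomposition says a compact such solution consists of two caps joined by a long $\varepsilon$-neck modeled on $\mathbb S^{n-1}\times I$. Each cap is diffeomorphic to a ball, by the noncompact classification applied to blow-up limits, which are Bryant solitons. So $M\cong\mathbb S^n$.
\item Alternatively, compact manifolds with positive curvature operator are spherical space forms (B\"ohm--Wilking). The cylinder asymptotics exclude nontrivial quotients, because the neck cross-sections are round $\mathbb S^{n-1}$ rather than quotients.
\end{itemize}
Then \cite{BDSN23} gives rotational symmetry and identifies $\tilde g$ as either shrinking round spheres or Perelman's ancient solution. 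Shrinking spheres have asymptotic shrinker $\mathbb S^n$, not the cylinder, so they are excluded. Backward uniqueness extends the conclusion to $t\le 0$.

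\textbf{Main obstacle.} The real content is Theorem~\ref{thm:main-result}, which is assumed here. Within this deduction, the delicate points are:
\begin{itemize}
\item the topological identification $M\cong\mathbb S^n$ in the compact case;
\item excluding quotients in both cases;
\item justifying that the classification on $(-\infty,b]$ propagates to $(-\infty,0]$.
\end{itemize}
I would handle the propagation by forward uniqueness of bounded-curvature solutions: on the model solutions, compact or with bounded curvature, forward uniqueness identifies the flow on $[b,0]$.
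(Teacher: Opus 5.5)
Your overall reduction is the one the paper uses: Theorem~\ref{thm:main-result}, a time shift, \cite{BN23} or \cite{BDSN23}, exclusion of quotients through the asymptotic shrinker, and forward uniqueness to recover $[b,0]$. Your noncompact case is fine.

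The gap is in the compact case, at the step $M\cong\sph^n$ that you need before \cite{BDSN23} can be applied to $M$ itself. B\"ohm--Wilking gives $M\cong\sph^n/\Gamma$, but your reason for $\Gamma=1$ (``the neck cross-sections are round $\sph^{n-1}$'') does not exclude anything. The $\mathbb Z_2$-quotient of Perelman's solution on $\mathbb{RP}^n$ has only round $\sph^{n-1}$ necks. Its quotient structure is visible only in the central region, where the blow-up limit and the asymptotic shrinker are $(\sph^{n-1}\times\R)/\mathbb Z_2$. For a general $\Gamma$ you have no mechanism linking the deck action to the asymptotic shrinker.

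The paper avoids this by working upstairs:
\begin{enumerate}
\item It passes to $\widetilde M$ (Lemma~\ref{lem:asymptotic-shrinker-of-universal-cover}) and shows $\widetilde M$ is compact; otherwise Corollary~\ref{cor:classification-noncompact-kappa} makes it the cylinder or the Bryant soliton, and both are excluded.
\item It gets $\widetilde M\cong\sph^n$ from \cite{BW08}, then proves uniform PIC and applies \cite{BDSN23} on $\widetilde M$.
\item Only then does it dispose of $\Gamma$: a free action preserving the two tips of Perelman's solution has order at most $2$, and the $\mathbb Z_2$-quotient has the wrong asymptotic shrinker.
\end{enumerate}
Applying Theorem~\ref{thm:main-result} to $\widetilde M$ instead of $M$ is the natural repair of your second route.

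Your cap--neck route can also be made to work, but as written it assumes what needs proof. A compact solution does not look like two caps joined by a long neck at all times; near extinction it is nearly round. Moreover, the noncompact classification applies only to blow-up limits that are noncompact. The missing steps are these:
\begin{enumerate}
\item You need $R\,\operatorname{diam}^2\to\infty$ as $t\to-\infty$ (Lemma~\ref{lem:large-diameter-times-curvature}, which comes from the cylindrical asymptotic shrinker) to get noncompact limits.
\item Lemma~\ref{lem:compactness-result} and the noncompact classification then show that every limit at very negative times is the round cylinder or Bryant (Lemma~\ref{lem:dim-red-compact}). Applied to $M$ itself, this is exactly what rules out quotient-cylinder regions.
\item A sufficiently negative time slice is then two Bryant-like balls joined by round necks, hence simply connected.
\item B\"ohm--Wilking, with positive curvature operator from Lemma~\ref{lem:strong-max-principle-line-or-positive-curvature}, then gives the standard $\sph^n$. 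This also rules out exotic twisted spheres.
\end{enumerate}
This repaired route proves $\Gamma=1$ topologically before classifying, whereas the paper uses the rigidity of Perelman's solution afterward.
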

Chan--Ma--Zhang \cite{CMZ25b} proved that, for ancient Ricci flows admitting Perelman's asymptotic solitons, these asymptotic solitons coincide with Bamler's tangent flows at infinity. Thus, in that setting, the above classification yields the conclusion mentioned by Haslhofer for four-dimensional $\kappa$-solutions whose tangent flow at $-\infty$ is $\mathbb S^3\times\mathbb R$ \cite{Has24}. 

\medskip
\noindent\textbf{Organization of the paper.}
In Section~\ref{sec:preliminaries} we collect preliminaries on asymptotic shrinkers and positive isotropic curvature. The proof of Theorem~\ref{thm:main-result} is divided into two cases: the noncompact case is handled in Section~\ref{sec:noncompact-case}, and the compact case in Section~\ref{sec:compact-case}.

\medskip
\noindent \textbf{Acknowledgments.} I thank Nata\v{s}a \v{S}e\v{s}um for her insights and continued support. I also thank Max Hallgren, Keaton Naff, and Junming Xie for inspiring discussions. 

\section{Preliminaries}
\label{sec:preliminaries}
We begin by reviewing the notions of an asymptotic shrinker and curvature pinching. Perelman \cite{Per02} proved the existence of an asymptotic shrinker for $\kappa$-solutions. We refer the reader to \cite[Proposition 39.1]{KL08} for a detailed proof of the following theorem. We write $\mathcal L$ for Perelman's reduced length and $\ell$ for the corresponding reduced distance. 
\begin{theorem}
\label{thm:existence-of-asymptotic-shrinker}
Let $n\geq 3$, $(M^n,g(t))_{t\in (-\infty,1]}$ be a $\kappa$-solution and $p_0\in M$. Let $\tau_i\to \infty$ and let $q_{\tau_i}$ be chosen so that for the backward Ricci flow $h(\tau):=g(-\tau),\tau\geq 0$,
\begin{equation}
\label{eqn:existence-of-asymptotic-shrinker-1}\ell^h_{(p_0,0)}(q_{\tau_i},\tau_i)\leq \frac{n}{2}.
\end{equation}
Then, after passing to a subsequence, 
\begin{equation}
\label{eqn:existence-of-asymptotic-shrinker-2}
\left(M^n,{\left(g_i(t):=\frac{1}{\tau_i}g(\tau_it)\right)}_{t\in [-1,-\frac{1}{2}]},(q_{\tau_i},-1)\right)\to (M^n_\infty,(g_\infty(t))_{t\in [-1,-\frac{1}{2}]},(q_\infty,-1)),
\end{equation}
 in the smooth Cheeger--Gromov sense where $(M^n_\infty,(g_\infty(t))_{t\in [-1,-\frac{1}{2}]})$ is a nonflat gradient shrinking Ricci soliton. 
\end{theorem}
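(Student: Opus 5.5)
The statement is Perelman's asymptotic soliton theorem, so I would follow the standard argument built on the reduced distance. The plan has four steps: produce a sequence of base points with uniformly bounded reduced distance, obtain uniform local geometry near those points, extract a limit flow, and show that the limit of the reduced distance is a potential making the limit a nonflat shrinker.

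\textbf{Base points and local geometry.} First I would check that points $q_{\tau_i}$ satisfying \eqref{eqn:existence-of-asymptotic-shrinker-1} exist. Set $\bar L(q,\tau)=4\tau\,\ell(q,\tau)$. Nonnegative curvature makes Perelman's inequality $\partial_\tau \bar L+\Delta\bar L\le 2n$ hold in the barrier sense. The maximum principle then shows that $\min \ell(\cdot,\tau)\le n/2$ for every $\tau>0$, which gives the points.

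Next comes local control around $(q_{\tau_i},\tau_i)$. The derivative estimates $|\nabla \ell|^2+R\le C\ell/\tau$ and $|\partial_\tau\ell|\le C\ell/\tau$ hold for solutions with nonnegative curvature operator. Combined with $\ell(q_{\tau_i},\tau_i)\le n/2$, they show that on the rescaled flows $g_i$, over $t\in[-1,-\tfrac12]$, the function $\ell$ is bounded on any ball of fixed $g_i(-1)$-radius around $q_{\tau_i}$. Hence the scalar curvature, and so the full curvature (curvature operator $\ge 0$ gives $|\mathrm{Rm}|\le R$), is uniformly bounded there.

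\textbf{Compactness.} $\kappa$-noncollapsing gives a uniform lower bound on the injectivity radius at $q_{\tau_i}$. Shi's estimates give bounds on all derivatives of curvature. Hamilton's compactness theorem then yields a subsequence converging smoothly to a limit flow $(M_\infty,g_\infty(t),q_\infty)$ on $[-1,-\tfrac12]$.

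\textbf{Identifying the limit as a shrinker.} This is the main obstacle. The rescaled reduced distances $\ell_i$ are invariant under parabolic rescaling and locally uniformly Lipschitz. So they converge in $C^{0,\alpha}_{loc}$, after a further subsequence, to a locally Lipschitz $\ell_\infty$. Perelman's reduced volume $\tilde V(\tau)=\int(4\pi\tau)^{-n/2}e^{-\ell}\,d\mathrm{vol}$ is monotone nonincreasing in $\tau$ and bounded below by a positive constant, by noncollapsing. Hence $\tilde V(\tau_i t)\to\bar V>0$ for every $t$, so on the limit the reduced volume is constant in time.

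Perelman's monotonicity identity writes $\frac{d}{d\tau}\tilde V$ as an integral involving the quantities $2\Delta\ell-|\nabla\ell|^2+R+(\ell-n)/\tau$ and $\partial_\tau\ell$. Constancy of $\tilde V$ therefore forces equality in both differential inequalities for $\ell_\infty$ in the weak/barrier sense. The delicate part is justifying this passage to the limit: one needs the Gaussian lower bound $\ell\ge d^2/(C\tau)-C$ to control the tails of the integrals, and the weak convergence of $\Delta\ell_i$.

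Once equality holds, $\ell_\infty$ satisfies a linear parabolic equation (via $e^{-\ell}$), so it is smooth by parabolic regularity. Perelman's computation then gives $\Ric+\nabla^2\ell_\infty-\frac{1}{2\tau}g=0$, so $g_\infty$ is a gradient shrinking soliton.

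\textbf{Nonflatness.} If the limit were flat, it would be $\R^n$ with $\ell_\infty=|x|^2/(4\tau)$, and then $\bar V=1$. But $\tilde V\to1$ as $\tau\to0$ and $\tilde V$ is strictly decreasing unless the original flow is a flat Gaussian shrinker, which would contradict nonflatness of the $\kappa$-solution. So $\bar V<1$ and the limit is nonflat. For a fully detailed treatment I would follow \cite[Proposition 39.1]{KL08}.
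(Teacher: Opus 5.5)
Your outline is the standard Perelman argument: base points with $\ell\le n/2$ from the $\bar L$ maximum principle, local curvature bounds from the $\ell$-derivative estimates, Hamilton compactness, constancy of the limiting reduced volume forcing the soliton equation, and $\bar V<1$ for nonflatness. This is exactly the proof in \cite[Proposition~39.1]{KL08}, which the paper cites rather than reproves, and your plan is correct, with the delicate step (passing the reduced volume and the differential inequalities to the limit) correctly identified and deferred to that reference.
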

We call $M^n_\infty$ an asymptotic shrinker (based at $(p_0,0)$) of $(M^n,g(t))_{t\leq 1}$. 
\begin{definition}
Let $(M^n,g(t))_{t\in (-\infty,1]}$ be a $\kappa$-solution with $n\geq 4$. We say that the flow $(M^n,g(t))_{t\in(-\infty,1]}$ has $\sph^{n-1}\times\R$ as an asymptotic shrinker if there exists $p_0\in M$ such that the asymptotic shrinker based at $(p_0,0)$ is the standard round cylinder $\sph^{n-1}\times \R$. 
\end{definition}
As noted in \cite{LZ22}, if one asymptotic shrinker is $\sph^{n-1}\times \R$, then every asymptotic shrinker must be $\sph^{n-1}\times \R$. 

We first observe that having $\sph^{n-1}\times \R$ as an asymptotic shrinker is preserved if we pass to the universal cover. 
\begin{lemma}
\label{lem:asymptotic-shrinker-of-universal-cover}
Let $n\geq 4$, and let $(M^n,g(t))_{t\leq 0}$ be a $\kappa$-solution which has $\sph^{n-1}\times \R$ as an asymptotic shrinker. Let $\pi:\widetilde M\to M$ be the universal cover of $M$, and set $\tilde g(t):=\pi^*g(t)$. Then $(\widetilde M,\tilde g(t))_{t\leq 0}$ is also a $\kappa$-solution and has $\sph^{n-1}\times \R$ as an asymptotic shrinker.
\end{lemma}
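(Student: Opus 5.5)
We will treat the two assertions of the lemma separately: first that $(\widetilde M,\tilde g(t))$ is a $\kappa$-solution, then that its asymptotic shrinker is the cylinder.

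\emph{The lift is a $\kappa$-solution.} Since $\pi$ is a local isometry for every $t$, $\tilde g(t)$ is a complete ancient Ricci flow with bounded nonnegative curvature operator, and it is nonflat. Noncollapsing passes to covers, but possibly with a worse constant, so we argue as follows. If $\pi_1(M)$ is infinite and $M$ is noncompact, the soul theorem shows that $\widetilde M$ splits off a line. In that case the asymptotic shrinker of $M$ would contain a flat factor quotiented in a way incompatible with the standard round cylinder $\sph^{n-1}\times\R$ (which is simply connected, with $\pi_1$ trivial); we expect to rule this out using \cite{LZ22}. Otherwise $\pi_1(M)$ is finite, and then a ball of radius $r$ in $\widetilde M$ with $|Rm|\le r^{-2}$ has volume at least that of its image ball in $M$. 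Since $\pi$ is locally distance nonincreasing, the ball maps onto the corresponding ball, so $\mathrm{vol}\,\tilde B\ge \mathrm{vol}\,B\ge\kappa r^n$. Hence $\widetilde M$ is $\kappa$-noncollapsed with the same $\kappa$. (Alternatively, Perelman's result that $\kappa$-solutions have asymptotic shrinkers gives noncollapsing via the reduced volume, which does not decrease under passing to finite covers.)

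\emph{The asymptotic shrinker of the lift.} Pick $\tilde p_0$ with $\pi(\tilde p_0)=p_0$. Reduced length is computed by infimizing over curves, and curves from $(p_0,0)$ in $M$ lift uniquely to curves from $\tilde p_0$ with the same $\mathcal L$. It follows that
\[
\ell^{\tilde h}_{(\tilde p_0,0)}(\tilde q,\tau)\ge \ell^h_{(p_0,0)}(\pi\tilde q,\tau),
\]
and for each $q$ some lift $\tilde q$ achieves equality, namely the endpoint of the lift of a minimizing $\mathcal L$-geodesic. So choosing $q_{\tau_i}$ as in Theorem~\ref{thm:existence-of-asymptotic-shrinker} for $M$, we may take $\tilde q_{\tau_i}$ to be these lifts, which also satisfy $\ell\le n/2$. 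Applying Theorem~\ref{thm:existence-of-asymptotic-shrinker} on $\widetilde M$ gives a subsequential limit $\widetilde M_\infty$, a gradient shrinker. The rescaled coverings $\pi:(\widetilde M,\tilde g_i,\tilde q_{\tau_i})\to(M,g_i,q_{\tau_i})$ are local isometries. Passing to the limit, as a limit of equivariant convergence (Fukaya--Yamaguchi style), yields a local isometry $\pi_\infty:\widetilde M_\infty\to \sph^{n-1}\times\R$ which is a covering map. Indeed, local isometries between complete manifolds of bounded geometry converge to local isometries, and completeness of $\widetilde M_\infty$ makes $\pi_\infty$ a Riemannian covering. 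Since $\sph^{n-1}\times\R$ is simply connected, $\pi_\infty$ is an isometry. Hence $\widetilde M_\infty$ is the round cylinder.

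\emph{Main obstacle.} The delicate step is the limiting covering map. We must check that the maps $\pi$ have uniformly controlled derivatives; this is automatic, since they are local isometries. We then apply Arzelà--Ascoli on exhausting compact sets, using that the injectivity radius of $\widetilde M$ is at least that of $M$ locally. The resulting limit map is a local isometry between complete manifolds, which suffices for it to be a covering. The first paragraph's handling of infinite fundamental group is a secondary point. We would first try to simply cite that $\kappa$-noncollapsing with $\pi_1$ infinite forces a splitting whose asymptotic shrinker has a line factor, contradicting a flat-factor count.
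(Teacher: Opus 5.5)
\textbf{Gap in the noncollapsing part.} The proposal has a gap in the $\kappa$-noncollapsing step, and it comes from your own case split. The branch ``$\pi_1(M)$ infinite and $M$ noncompact'' is never proved; you only expect to rule it out. The ``otherwise $\pi_1(M)$ finite'' branch also misses compact $M$ with infinite $\pi_1$. The appeal to \cite{LZ22} is circular: its splitting statement applies to $\kappa$-solutions with cylindrical asymptotic shrinker, which is what you are trying to prove for $\widetilde M$. The ``flat-factor count'' yields no contradiction either, since $\sph^{n-1}\times\R$ itself has a line factor.

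None of this is needed. Your ball argument never uses finiteness of $\pi_1(M)$, and it gives the same $\kappa$, not a worse one. What it does need is a correct justification of surjectivity. Distance nonincreasing only gives $\pi(B_{\tilde g(t)}[\tilde x;r])\subseteq B_{g(t)}[x;r]$. The reverse inclusion is what transfers the curvature bound from the ball in $\widetilde M$ to the whole ball in $M$, so that noncollapsing of $M$ can be invoked. It comes from lifting a minimizing $g(t)$-geodesic from $x$ to $y$ starting at $\tilde x$; the lift has the same length, so its endpoint lies in the ball. Then $\operatorname{Vol}(B_{\tilde g(t)}[\tilde x;r])\ge \operatorname{Vol}(B_{g(t)}[x;r])$ holds because $\pi$ restricted to the ball is a surjective local isometry, for a cover of any degree. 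This is exactly the paper's argument, so drop the case split and the soul-theorem discussion.

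\textbf{Comparison for the asymptotic shrinker.} Lifting $\mathcal L$-minimizing curves to get $\tilde q_{\tau_i}$ with $\tilde\ell\le n/2$ matches the paper. After that you take a genuinely different, and correct, route. You apply Theorem~\ref{thm:existence-of-asymptotic-shrinker} on $\widetilde M$, which is legitimate once the first part is fixed, to get some limit shrinker. You then pass the time-independent local isometries $\Phi_i^{-1}\circ\pi\circ\widetilde\Phi_i$ to a limit by Arzel\`a--Ascoli. The limit is a local isometry $\widetilde M_\infty\to\sph^{n-1}\times\R$ on each time slice, hence a Riemannian covering by completeness, hence an isometry by simple connectivity.

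The paper instead lifts the Cheeger--Gromov embeddings $\Phi_{i,j}:K_j\to M$ directly through $\pi$, using an exhaustion of the cylinder by compact simply connected domains $K_j$. Injectivity of each lift follows from injectivity of $\Phi_{i,j}$. This exhibits the convergence of the lifted flows to the cylinder outright, with no compactness theorem on $\widetilde M$, no Arzel\`a--Ascoli for maps, and no covering lemma for local isometries. Your version is more conceptual and shows more generally that the cover's asymptotic shrinker covers the base's. The paper's is more elementary but relies essentially on the limit being simply connected.
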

\begin{proof}
For each fixed $t\leq 0$, the map $\pi:(\widetilde M,\tilde g(t))\to (M,g(t))$ is a local isometry. Hence $(\widetilde M,\tilde g(t))_{t\leq 0}$ is a complete ancient solution of the Ricci flow. Moreover, each time-slice $(\widetilde M,\tilde g(t))$ has bounded nonnegative curvature operator, and is nonflat.

Let $\tilde{x} \in \widetilde{M}, x=\pi(\tilde{x})$, $t\leq 0$, and $r>0$. Suppose that $R_{\tilde{g}(t)} \leq r^{-2}$ on $B_{\tilde{g}(t)}[\tilde{x}; r]$. We first claim that $\pi\left(B_{\tilde{g}(t)}[\tilde{x}; r]\right)=B_{g(t)}[x;r]$. The inclusion $\subseteq$ follows since $\pi$ is distance nonincreasing. Conversely, let $y\in B_{g(t)}[x;r]$. Since $(M,g(t))$ is complete, there is a minimizing $g(t)$-geodesic $\gamma$ from $x$ to $y$. Lift $\gamma$ starting at $\tilde x$, and denote the lifted curve by $\tilde\gamma$. If $\tilde y$ is the endpoint of $\tilde\gamma$, then $\pi(\tilde y)=y$, and since $\pi$ is a local isometry,
$$
d_{\tilde g(t)}(\tilde x,\tilde y)
\leq L_{\tilde g(t)}(\tilde\gamma)
=
L_{g(t)}(\gamma)
=
d_{g(t)}(x,y)
\leq r.
$$
Thus $\tilde y\in B_{\tilde g(t)}[\tilde x;r]$, proving the claim.

Since ${R}_{\tilde{g}(t)}=R_{g(t)} \circ \pi$, we obtain $R_{g(t)}\leq r^{-2}$ on $B_{g(t)}[x;r]$. Hence, $\operatorname{Vol}_{g(t)}(B_{g(t)}[x;r])\geq \kappa r^n$. By $\operatorname{Vol}_{\tilde{g}(t)}(B_{\tilde{g}(t)}[\tilde{x};r])\geq \operatorname{Vol}_{g(t)}(B_{g(t)}[x;r])$, it follows that $\operatorname{Vol}_{\tilde{g}(t)}(B_{\tilde{g}(t)}[\tilde{x};r])\geq \kappa r^n$. Thus $(\widetilde M,\tilde g(t))_{t\leq 0}$ is $\kappa$-noncollapsed, and therefore it is a $\kappa$-solution.

It remains to show that $(\widetilde M,\tilde g(t))$ has $\sph^{n-1}\times\R$ as an asymptotic shrinker. By the hypothesis and Theorem~\ref{thm:existence-of-asymptotic-shrinker}, after passing to a subsequence we may choose a basepoint $p_0\in M$, a sequence $\tau_i\to\infty$, and points $q_i:=q_{\tau_i}\in M$ satisfying \eqref{eqn:existence-of-asymptotic-shrinker-1}, such that the convergence in \eqref{eqn:existence-of-asymptotic-shrinker-2} holds with the limit identified with the shrinking round cylinder $\sph^{n-1}\times\R$.

Write $\tilde{h}(\cdot):=\tilde{g}(-\cdot)$. Fix a lift $\tilde p_0\in \widetilde M$ of $p_0$. For each $i$, choose an $\mathcal L$-minimizing curve $\gamma_i$ for $h(\tau)$ from $(p_0,0)$ to $(q_i,\tau_i)$. Let $\tilde\gamma_i$ be its lift starting at $\tilde p_0$, and denote its endpoint by $\tilde q_i$. Then $\pi(\tilde q_i)=q_i$. Since $\pi$ is a local isometry for the backward flows, scalar curvature and speed are preserved along the lifted curve. Hence $\mathcal L_{\tilde h}(\tilde\gamma_i)=\mathcal L_h(\gamma_i).$ Therefore
\begin{equation}
\label{eqn:proof-of-asymptotic-shrinker-of-univ-cover-1}
\ell^{\tilde h}_{(\tilde p_0,0)}(\tilde q_i,\tau_i)
\leq
\frac{1}{2\sqrt{\tau_i}}\mathcal L_{\tilde h}(\tilde\gamma_i)
=
\frac{1}{2\sqrt{\tau_i}}\mathcal L_h(\gamma_i)
=
\ell^h_{(p_0,0)}(q_i,\tau_i)
\leq \frac{n}{2}.
\end{equation}
Since $n\geq 4$, $\mathbb S^{n-1}\times\mathbb R$ is simply connected. Choose an exhaustion $K_1\subset K_2\subset\cdots\subset \mathbb S^{n-1}\times\mathbb R$ by compact connected simply connected domains with smooth boundary, each containing $q_\infty$ in its interior. For each $j$, the Cheeger--Gromov convergence \eqref{eqn:existence-of-asymptotic-shrinker-2} gives, for all sufficiently large $i$, embeddings $\Phi_{i,j}:K_j\to M$ with $\Phi_{i,j}(q_\infty)=q_i$ and $\Phi_{i,j}^*\bigl(\tau_i^{-1}g(\tau_i t)\bigr) \to g_\infty(t)$ smoothly on $K_j\times[-1,-1/2]$. 

Since $K_j$ is simply connected, $\Phi_{i,j}$ lifts uniquely to a map $\widetilde\Phi_{i,j}:K_j\to \widetilde M$ satisfying
$$
\pi\circ \widetilde\Phi_{i,j}=\Phi_{i,j},
\qquad
\widetilde\Phi_{i,j}(q_\infty)=\tilde q_i.
$$
Since $\Phi_{i,j}$ is an embedding and $\pi$ is a local diffeomorphism, the lift $\widetilde\Phi_{i,j}$ is an immersion. It is also injective: if $\widetilde\Phi_{i,j}(a)=\widetilde\Phi_{i,j}(b)$, then $\Phi_{i,j}(a)=\Phi_{i,j}(b)$, and hence $a=b$. Since $K_j$ is compact, $\widetilde\Phi_{i,j}$ is an embedding, i.e., a diffeomorphism onto its image. Finally, using $\tilde g(t)=\pi^*g(t)$, we obtain 
$$
\widetilde\Phi_{i,j}^*
\bigl(\tau_i^{-1}\tilde g(\tau_i t)\bigr)
=
\Phi_{i,j}^*
\bigl(\tau_i^{-1}g(\tau_i t)\bigr)
\to
g_\infty(t)
$$
smoothly on $K_j\times[-1,-1/2]$. Thus
$$
\left(
\widetilde M,
\tilde g_i(t):=\tau_i^{-1}\tilde g(\tau_i t),\,
(\tilde q_i,-1)
\right)
\to
\left(
\mathbb S^{n-1}\times\mathbb R,
g_\infty(t),
(q_\infty,-1)
\right)
$$
in the smooth Cheeger--Gromov sense. Together with \eqref{eqn:proof-of-asymptotic-shrinker-of-univ-cover-1}, this shows that $(\widetilde M,\tilde g(t))_{t\leq 0}$ has $\mathbb S^{n-1}\times\mathbb R$ as an asymptotic shrinker. 
\end{proof}
We use $R(X,Y)Z$ for the curvature endomorphism, $\operatorname{Rm}$ for the corresponding Riemann curvature tensor, $\Ric$ for the Ricci tensor, and $R$ for the scalar curvature. We follow the conventions used by \cite{Bre10}, \cite{Bre19}, and \cite{BN23}. The curvature tensor is defined by 
$$-R(X,Y)Z=\nabla_X \nabla _Y Z-\nabla_Y\nabla_X Z-\nabla_{[X,Y]}Z,$$
for any vector fields $X,Y,Z$, and we write $\langle R(e_i,e_j)e_k,e_l\rangle=R_{ijkl}$ for vectors $e_i,e_j,e_k,e_l$. In this convention, the sectional curvature of the plane spanned by orthonormal 2-frame $\{e_1,e_2\}$ is $R_{1212}$. 
\begin{definition}
\label{def:definitions-of-uniform-PIC}
Let $n\geq 4$ and $\Lambda>0$. 
\begin{itemize}
    \item A Riemannian manifold $(M^n,g)$ is said to have nonnegative isotropic curvature (NIC) if for any $p \in M$ and any orthonormal 4-frame $\left\{e_1, e_2, e_3, e_4\right\}$ of $T_p M$ we have
\begin{equation}
\label{eqn:PIC-inequality-higher-dim}
R_{1313}+R_{1414}+R_{2323}+R_{2424}-2 R_{1234} \geq 0.
\end{equation}
    \item  A Riemannian manifold $(M^n,g)$ is said to have uniformly $\Lambda$-positive isotropic curvature (uniformly $\Lambda$-PIC) if for any $p \in M$ and any orthonormal 4-frame $\left\{e_1, e_2, e_3, e_4\right\}$ of $T_p M$ we have
\begin{equation}
\label{eqn:L-PIC-Pinching-inequality-higher-dim}
R_{1313}+R_{1414}+R_{2323}+R_{2424}-2 R_{1234} \geq 4 \Lambda |\operatorname{Rm}|>0 .
\end{equation}
\item We say a metric $g$ on $M^n$ satisfies the $\Lambda$-PIC pinching inequality at a point $p$ if  (\ref{eqn:L-PIC-Pinching-inequality-higher-dim}) is satisfied for all orthonormal $4$-frames of $T_p M$. 
\item Let $(M^n,g(t))_{t\in [a,b]}$ be a solution to Ricci flow. We say the flow $(M^n,g(t))_{t\in [a,b]}$ is uniformly $\Lambda$-PIC if for each $t\in [a,b]$, $(M^n,g(t))$ is uniformly $\Lambda$-PIC. 
\item Let $(M^n,g(t))_{t\in [a,b]}$ be a solution to Ricci flow. We say the flow $(M^n,g(t))_{t\in [a,b]}$ is uniformly PIC if there exists some $\alpha>0$ for which it is uniformly $\alpha$-PIC. 
\end{itemize}
\end{definition}
\noindent We note the following basic facts about the uniform PIC condition. Let $\Lambda>0$.
\begin{enumerate}[(\roman{enumi})]
    \item (Invariant under scaling) If a Riemannian metric $g$ on a smooth manifold $M$ satisfies $\Lambda$-PIC pinching inequality at a point $p\in M$, then so does $\lambda g$ at $p$ for any $\lambda>0$.
    \item (Preserved under Cheeger--Gromov convergence) Suppose $(M^n_i,g_i,x_i)_{i\geq 1}$ is a sequence of manifolds that converge to $(N,h,y)$ in the smooth Cheeger--Gromov sense. If each $(M^n_i,g_i)$ is uniformly $\Lambda$-PIC, then $(N,h)$ is also uniformly $\Lambda$-PIC. 
    \item  (Stable under closeness) Suppose $(M^n_i,g_i,x_i)_{i\geq 1}$ is a sequence of complete manifolds that converge to a complete manifold $(N,h,y)$ in the smooth Cheeger--Gromov sense. If $(N,h)$ is uniformly $\Lambda$-PIC and $D>0$, there exists $i_1$ (depending on $D$) such that for all $i\geq i_1$, every point of $B_{g_i}[x_i;D]\sub M^n_i$ satisfies $(\Lambda/2)$-PIC pinching inequality with respect to $g_i$. 
\end{enumerate}
We refer the reader to \cite{Ham97} and \cite{Bre10} for more details about PIC. 

The following lemma fixes a constant $\theta_n$ such that round cylinder and Bryant soliton satisfy the PIC pinching inequality with constant $\theta_n$. This will be used throughout the paper. 
\begin{lemma}
\label{lem:uniform-theta}
There exists $\theta_n>0$ depending only on $n$ such that the round cylinder $\sph^{n-1}\times \R$ and the Bryant soliton $\operatorname{Bry}^n$ are both uniformly $\theta_n$-PIC. 
\end{lemma}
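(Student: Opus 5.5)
The plan is to treat the two model spaces separately and take $\theta_n$ to be the minimum of the two pinching constants obtained.

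\textbf{The cylinder.} Choose an orthonormal frame with $e_n$ tangent to the $\R$ factor. The curvature of the cylinder is then $R_{ijkl}=c(\delta_{ik}\delta_{jl}-\delta_{il}\delta_{jk})$ restricted to the sphere directions, with $c>0$. Equivalently, $\operatorname{Rm}=c\,(P\wedge P)$, where $P$ is the orthogonal projection onto $e_n^\perp$. Let $\{e_1,\dots,e_4\}$ be any orthonormal 4-frame and write $a_i=|Pe_i|^2$.
\begin{itemize}
\item Since the vectors $e_i$ are orthonormal, $\sum_i(1-a_i)=\sum_i\langle e_i,e_n\rangle^2\le 1$.
\item A direct computation gives $R_{ijij}=c\,(a_ia_j-\langle Pe_i,Pe_j\rangle^2)$ for $i\ne j$. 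Because $\langle Pe_i,Pe_j\rangle=-\langle e_i,e_n\rangle\langle e_j,e_n\rangle$, this equals $c\,(1-(1-a_i)-(1-a_j))$.
\item Similarly, $R_{1234}=c\,(\langle Pe_1,Pe_3\rangle\langle Pe_2,Pe_4\rangle-\langle Pe_1,Pe_4\rangle\langle Pe_2,Pe_3\rangle)$. Expanding, the two terms cancel exactly, so $R_{1234}=0$.
\end{itemize}
Summing the four sectional terms, the isotropic expression equals $c\,(4-2\sum_i(1-a_i))\ge 2c$. Since $|\operatorname{Rm}|$ is a fixed multiple of $c$, this gives uniform $\Lambda_1$-PIC with $\Lambda_1$ depending only on $n$. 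Both sides scale by the same factor, so this holds for all time slices.

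\textbf{The Bryant soliton.} The soliton is rotationally symmetric, $g=dr^2+\phi(r)^2g_{\sph^{n-1}}$. Its curvature operator is diagonal in the natural frame, with two eigenvalues:
\begin{itemize}
\item $\lambda_1=-\phi''/\phi$ on radial planes $e_r\wedge e_i$;
\item $\lambda_2=(1-\phi'^2)/\phi^2$ on tangential planes $e_i\wedge e_j$.
\end{itemize}
Both are positive. The same projection computation as above, with the cylinder's $c$ replaced by the pair $(\lambda_2,\lambda_1)$, shows that the isotropic expression is bounded below by $2\min(\lambda_1,\lambda_2)$, up to a dimensional factor. Indeed, the curvature operator is $\lambda_2 P\wedge P+\lambda_1(\text{mixed part})$, which dominates $\min(\lambda_1,\lambda_2)\,I\wedge I$, a positive multiple of the round-sphere curvature, and $I\wedge I$ has isotropic value $4$. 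Meanwhile $|\operatorname{Rm}|\le C_n\max(\lambda_1,\lambda_2)$. It therefore suffices to show that the ratio $\lambda_1/\lambda_2$ is bounded above and below by positive constants on the whole manifold.

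\textbf{Main obstacle: the ratio bound.} I will prove $\lambda_1/\lambda_2\in[\epsilon,C]$ using the known asymptotics of the Bryant soliton.
\begin{itemize}
\item At the tip, smoothness gives $\lambda_1=\lambda_2$, so the ratio is $1$.
\item As $r\to\infty$, Bryant's expansion gives $\phi\sim\sqrt{r}$. Hence $\lambda_2\sim r^{-1}$ and $\lambda_1\sim r^{-2}$, so $\lambda_1/\lambda_2\to 0$.
\end{itemize}
The second point means the crude bound from $\min(\lambda_1,\lambda_2)$ fails at infinity. I will therefore refine the estimate instead of relying on that minimum.

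The refinement uses only the mixed sectional curvatures. For any orthonormal 4-frame, at most one unit of $\sum_i(1-a_i)$ is radial, by the first bullet in the cylinder step. The $\lambda_2$-part alone then contributes at least $2\lambda_2$, exactly as in the cylinder computation. The $\lambda_1$-part is the isotropic value of a nonnegative curvature operator (a wedge with the radial direction), so it is nonnegative. Hence the isotropic expression is at least $2\lambda_2$.

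Since $\lambda_1\le C\lambda_2$ everywhere, by the asymptotics together with compactness on bounded regions, we get $|\operatorname{Rm}|\le C'\lambda_2$. This yields a uniform $\Lambda_2$-PIC bound. Finally, set $\theta_n=\min(\Lambda_1,\Lambda_2)$.
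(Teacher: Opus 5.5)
Your argument is correct, but it takes a more computational route than the paper's.

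\textbf{The paper's route.} The paper argues softly. For the cylinder it asserts that the normalized product metric is uniformly $\theta_{cyl}$-PIC and uses scale invariance. For the Bryant soliton it uses that every point outside a compact set $K$ is the center of an $\varepsilon$-neck, hence $\theta_{cyl}/2$-pinched by closeness to the cylinder. On $K$, positivity of the curvature operator plus compactness gives some $c>0$.

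\textbf{Your route.} You use the warped-product structure to compute the isotropic curvature exactly. Set $s=\sum_{i=1}^4\langle e_i,\partial_r\rangle^2\le 1$ and write $\operatorname{Rm}=\lambda_2R^{\mathrm{tan}}+\lambda_1R^{\mathrm{mix}}$, where both pieces have $R_{1234}=0$. Then
\[
R_{1313}+R_{1414}+R_{2323}+R_{2424}-2R_{1234}=\lambda_2(4-2s)+2\lambda_1 s\ge 2\lambda_2 .
\]
Everything therefore reduces to the single scale-invariant bound $\sup_{\operatorname{Bry}^n}\lambda_1/\lambda_2<\infty$.

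\textbf{Comparison.} Your approach actually verifies the cylinder case, which the paper leaves implicit, and it yields an explicit pinching constant. The price is dependence on rotational symmetry and on derivative-level asymptotics of the profile: you need $-\phi''/\phi=O(r^{-2})$, not merely $\phi\sim\sqrt r$. That said, $\lambda_1\le C\lambda_2$ near infinity also follows from the same neck-closeness the paper invokes, since the cylinder has vanishing radial curvature. The paper's argument needs no symmetry and applies to any positively curved, asymptotically cylindrical model.

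\textbf{Points to fix in a write-up.}
\begin{itemize}
\item Drop the intermediate claim $\lambda_1/\lambda_2\in[\epsilon,C]$. As you yourself observe, the lower bound is false, and it is never used.
\item At the tip the radial direction is undefined. There the curvature operator is a positive multiple of the identity, so the estimate is immediate.
\item State why $\Lambda_2$ depends only on $n$: the ratio is scale-invariant, the Bryant soliton is unique up to scaling, and its time slices are isometric.
\item Do not call the axial vector $e_n$ when $e_1,\dots,e_4$ denotes an arbitrary frame, since the two collide for $n=4$.
\end{itemize}
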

\begin{proof}
Let $M=\sph^{n-1}\times\R$ with the standard product metric $\bar g$, normalized so that $|\operatorname{Rm}(\bar g)|=1$. $\bar g$ is uniformly $\theta_{cyl}$-PIC for some $\theta_{cyl}>0$ depending only on $n$. Since any standard metric on $\sph^{n-1}\times\R$ differs from $\bar g$ by a rescaling and an isometry, and the uniform PIC condition is scale-invariant, every standard round cylinder is uniformly $\theta_{cyl}$-PIC.

Next, we fix a Bryant soliton metric $g$. For each $\varepsilon>0$, there exists a compact subset $K\sub \Bry^n$ such that every point of $\Bry^n\setminus K$ is the center of an $\varepsilon$-neck, with respect to $g$. Choosing $\varepsilon$ small enough, it follows that every point of $\Bry^n\setminus K$ satisfies $\theta_{cyl}/2$-PIC pinching inequality. Because $g$ has positive curvature operator, there exists $c>0$ such that every point in $K$ satisfies $c$-PIC pinching inequality. As a result, $(\Bry^n,g)$ is uniformly $\theta_{Bry}$-PIC where $\theta_{Bry}=\min(\theta_{cyl}/2,c)$. Any Bryant soliton metric $h$ differs from $g$ by a rescaling and an isometry, hence $h$ is also uniformly $\theta_{Bry}$-PIC. We may choose $\theta_n:=\min(\theta_{cyl},\theta_{Bry})$ to finish the proof.  
\end{proof}
We now prove a propagation property for the uniform PIC pinching condition, which may be of independent interest. 
\begin{proposition}
\label{prop:continuity-theta}
Given $n\geq 4,\theta >0$ and $C_0>0$, there exists $c_n\in (0,1)$, depending only on $n$, and $\delta>0$ depending on $\theta,C_0,n$, such that the following is true. Let $T>\delta$ and $(M^n,g(t))_{t\in [0,T]}$ be any complete Ricci flow with nonnegative curvature operator such that $0\leq R\leq C_0$ on $M\times [0,T]$. If $(M^n,g(0))$ is uniformly $\theta$-PIC, then $(M^n,g(t))_{t\in [0,\delta]}$ is uniformly $\theta c_n$-PIC.  
\end{proposition}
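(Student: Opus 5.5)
The plan is to turn the scale-invariant condition into a linear condition relative to scalar curvature, show that this condition is a fiberwise convex set, and propagate it for a short time by Hamilton's maximum principle for systems. The hypotheses $0\le R\le C_0$ and nonnegative curvature operator give uniform control on the reaction terms.

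\textbf{Step 1: reduce to a linear pinching condition.} For an algebraic curvature tensor $\mathrm{Rm}$ and an orthonormal $4$-frame $e=\{e_1,\dots,e_4\}$, write
$$I_e(\mathrm{Rm})=R_{1313}+R_{1414}+R_{2323}+R_{2424}-2R_{1234}.$$
For nonnegative curvature operator there are constants $1\le A_n$ with
$$A_n^{-1}R\le|\mathrm{Rm}|\le A_n R.$$
Indeed, the eigenvalues of the curvature operator are nonnegative and their sum is a fixed multiple of $R$.

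Hence uniform $\theta$-PIC at $t=0$ gives
$$I_e(\mathrm{Rm})\ge 4a_0R \quad\text{for every frame, with } a_0:=\theta/A_n.$$
Conversely, $I_e\ge 4aR$ implies $I_e\ge 4(a/A_n)|\mathrm{Rm}|$. It therefore suffices to show that $I_e\ge 4a(t)R$ holds on $[0,\delta]$ with $a(t)\ge a_0/2$. This yields the claim with $c_n=1/(2A_n^2)$, which depends only on $n$.

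\textbf{Step 2: the invariant set.} Define
$$K(t)=\{\mathrm{Rm}:\ \mathrm{Rm}\text{ has nonnegative curvature operator and } I_e(\mathrm{Rm})-4a(t)R\ge 0 \text{ for all frames } e\},$$
with $a(t)=a_0-Kt$ and $K$ to be chosen. Each $K(t)$ is closed and convex, since $\min_e I_e$ is concave and $R$ is linear. It is $O(n)$-invariant, hence invariant under parallel transport. Nonnegative curvature operator is already preserved by the flow, so it is enough to check that the ODE $\frac{d}{dt}\mathrm{Rm}=Q(\mathrm{Rm})$ preserves the second condition on $\{R\le C_0\}$.

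At a boundary point, choose a frame $e$ realizing the minimum. By Hamilton's criterion, it suffices to have
$$\frac{d}{dt}\bigl(I_e(\mathrm{Rm})-4a(t)R\bigr)\ge 0 \quad\text{at that frame.}$$
Using only $|Q(\mathrm{Rm})|\le C_n|\mathrm{Rm}|^2\le C_n'R^2\le C_n'C_0R$, we get
$$\frac{d}{dt}I_e\ge -C_n'C_0R.$$
Also $\frac{d}{dt}R=2|\mathrm{Ric}|^2\le C_n'C_0R$. Thus the derivative is at least
$$\bigl(-C_n'C_0-4a\,C_n'C_0+4K\bigr)R,$$
which is nonnegative once $K:=C_n'C_0(1+4a_0)/4$. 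Choosing $\delta=a_0/(2K)$ keeps $a(t)\ge a_0/2$ on $[0,\delta]$. This $\delta$ depends only on $\theta$, $C_0$ and $n$, as required.

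\textbf{Step 3: apply the maximum principle.} Apply Hamilton's maximum principle for time-dependent, fiberwise convex, parallel-invariant sets on complete manifolds with bounded curvature (e.g.\ the Chow--Lu or Shi-type noncompact versions). Bounded curvature holds here because $|\mathrm{Rm}|\le A_nC_0$. The conclusion is that $\mathrm{Rm}(x,t)\in K(t)$ for all $t\in[0,\delta]$, and Step 1 then finishes the proof.

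\textbf{Main obstacle.} The delicate point is the rigorous application of the maximum principle on a noncompact manifold with a time-dependent convex set. The standard approach is to perturb with a barrier function, such as a multiple of $d_{g(0)}(x_0,\cdot)^2$ plus $\varepsilon e^{Ct}$, which is controlled using the bounded curvature. Alternatively, one can work with the scalar quantity $u=\min_e I_e-4a(t)R$ in the viscosity/barrier sense and apply a noncompact scalar maximum principle for $u$. Step 2 shows that the reaction term is nonnegative whenever $u=0$.
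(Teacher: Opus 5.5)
Your proof is correct, but it takes a different and more elementary route than the paper.

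\textbf{How the paper does it.} The paper rewrites the pinching condition as $S-\beta\operatorname{scal}(S)I\in C_{NIC}$. It then invokes Brendle's Proposition~A.5: the set $\{S-\kappa(t)I\in C_{NIC}\}$ is preserved by Hamilton's ODE whenever $\kappa'\le 2(n-1)\kappa^2$. This exploits the favorable sign of the reaction term at a minimizing frame. The only loss then comes from $\frac{d}{dt}\operatorname{scal}=2|\operatorname{Ric}|^2\le 2C_0\operatorname{scal}$, and that loss is multiplicative. As a result the paper's coefficient obeys $\bar b(t)\ge \beta e^{-4C_0 t}$, so its $\delta$ could be taken to be $\ln 2/(4C_0)$ regardless of $\theta$. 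For large $\theta$ (Case~1) no decay occurs at all.

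\textbf{How your route differs.} You discard the structure of $Q$ and bound $I_e(Q(\operatorname{Rm}))\ge -C_nC_0R$ for \emph{every} frame. This is an additive loss, which is why you need the linear decay $a(t)=a_0-Kt$ and get $\delta\sim\theta/C_0$, degenerating as $\theta\to0$. The statement permits this. In exchange you avoid NIC-cone invariance and the case split. Also, since your estimate holds for all frames, each $I_e-4a(t)R$ is nondecreasing along ODE trajectories with $\operatorname{scal}\le C_0$ and $a\ge 0$. So no tangent-cone subtlety arises in Hamilton's criterion.

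\textbf{What to make explicit in Step 3.} Your ODE condition is verified only where $\operatorname{scal}\le C_0$. That region is not ODE-invariant, since $\operatorname{scal}$ increases along the ODE. Moreover, your crude bound gives nothing for large $\operatorname{scal}$, because $-C\operatorname{scal}^2$ beats $4K\operatorname{scal}$. So the standard maximum principle for time-dependent convex sets, which requires invariance for all initial data, does not apply verbatim. You need one of the following:
\begin{itemize}
\item The Chow--Lu avoidance-set version. This is exactly what the paper uses, via [CCG+08, Theorem~12.38] with avoidance set $\{\operatorname{scal}\ge 2C_0\}$; redo your constants with $2C_0$ in place of $C_0$.
\item Your scalar barrier/viscosity alternative, in which the reaction term is evaluated only at actual values $\operatorname{Rm}(x,t)$, where $R\le C_0$ holds.
\end{itemize}
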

\begin{proof}
Consider $\R^n$ with its Euclidean inner product. Let $ \mathcal{C}_B(\R^n)$ denote the space of all algebraic curvature tensors on $\R^n$. For $S\in \mathcal{C}_B(\R^n)$ we denote by $\Ric(S)$ and $\operatorname{scal}(S)$ the Ricci tensor and scalar curvature of $S$. We also consider Hamilton's ODE $\frac{d}{dt} S=Q(S):=[S^2+S^{\#}]$. Under Hamilton's ODE, 
$$\frac{d}{dt}\operatorname{scal}(S(t))=2|\Ric(S(t))|^2.$$
 Define 
$$C_{NIC}(\R^n):=\{S\in \mathcal{C}_B(\R^n):S\text{ has NIC}\}.$$
We define $I:=\frac{1}{2}\mathrm{id}\odot\mathrm{id}$ which corresponds to the identity curvature operator (with isotropic curvature 4) and $\odot$ is the Kulkarni-Nomizu product. We let $c_n\in (0,\frac{1}{2})$, depending only on $n$, such that $|S|\geq 2c_n|\operatorname{scal}(S)|$ for all $S\in \mathcal{C}_B(\R^n)$. Note that if $S\in \mathcal{C}_B(\R^n)$ with nonnegative curvature operator, and if $S-\beta \operatorname{scal}(S)I\in C_{NIC}$ then $S$ is uniformly $\beta$-PIC because $\operatorname{scal}(S)\geq |S|$. On the other hand, if $S$ is uniformly $\beta$-PIC, then $S-2c_n\beta \operatorname{scal}(S)I\in C_{NIC}$. 

\noindent \textbf{Case 1. }Suppose now that $\beta\geq \frac{1}{(n-1)}$. Suppose that $S_0-\beta I\operatorname{scal}(S_0)\in C_{NIC}$, $S_0$ has nonnegative curvature operator, and $S(t)$ is a solution to Hamilton's ODE starting at $S_0$. Then, $S(t)$ has nonnegative curvature operator for all $t$. Define $\kappa(t):=\beta \operatorname{scal}(S(t))$. We observe that 
$$\begin{aligned}
\kappa'(t)-2(n-1)\kappa(t)^2&=2\beta|\Ric|^2-2(n-1)\beta^2\operatorname{scal}^2(S(t))\\
&\leq 2\beta \operatorname{scal}(S(t))^2(1-(n-1)\beta)\\ 
& \leq 0
\end{aligned}$$
where we used $|\Ric(S(t))|\leq \operatorname{scal}(S(t))$. By \cite[Proposition A.5]{Bre19}, $S(t)-\kappa(t)I\in C_{NIC}$. Hence $S(t)-\beta \operatorname{scal}(S(t))I\in C_{NIC}$ implying $S(t)$ is uniformly $\beta$-PIC. Suppose $(M^n,g(t))_{t\in [0,T]}$ is a solution to Ricci flow with bounded nonnegative curvature and $g(0)$ is uniformly ${\theta}$-PIC with ${\theta}\geq \frac{1}{2c_n(n-1)}$, then $\operatorname{Rm}_{g(0)}-\beta R_{g(0)}I\in C_{NIC}$ where $\beta:=2c_n\theta \geq \frac{1}{n-1}$. By applying maximum principles, it follows that $\operatorname{Rm}_{g(t)}-\beta R_{g(t)}I\in C_{NIC}$ for all $t\in [0,T]$ hence $g(t)$ is uniformly $\beta$-PIC for all $t$. Since $\beta\geq c_n\theta$, we obtain that $g(t)$ is uniformly $\theta c_n$-PIC for all $t$. This proves the proposition when $\theta \geq \frac{1}{2c_n (n-1)}$. 

\noindent \textbf{Case 2. }Let $\beta \in (0,\frac{1}{n-1})$. Suppose that $S_0$ has nonnegative curvature operator, and $S(t)$ is a solution to Hamilton's ODE starting at $S_0$ such that $\operatorname{scal}(S(t))\leq C_0$ for all $t\in[0,T]$. Note that $S(t)$ has nonnegative curvature operator for all $t$. Define $\kappa(t):= b(t) \operatorname{scal}(S(t))$, where
\[
b(t):=\frac{\beta e^{-2 C_0 t}}{1-(n-1) \beta\left(1-e^{-2 C_0 t}\right)}.
\]
Note that $b(t)-(n-1)b(t)^2>0$ and $b'(t)+2(b(t)-(n-1)b(t)^2)C_0\equiv 0$ for all $t>0$. Further, 
$$\begin{aligned}
\kappa'(t)-2(n-1)\kappa(t)^2&=b'(t)\operatorname{scal}(S(t))+2b(t)|\Ric(S(t))|^2-2(n-1)b(t)^2\operatorname{scal}(S(t))^2\\
&\leq [b'(t)+2(b(t)-(n-1)b(t)^2)\operatorname{scal}(S(t))]\operatorname{scal}(S(t))\\
&\leq [b'(t)+2(b(t)-(n-1)b(t)^2)C_0]\operatorname{scal}(S(t))\\
&\leq 0
\end{aligned}$$
where we used $|\Ric(S)|\leq \operatorname{scal}(S)$. Therefore, by \cite[Proposition~A.5]{Bre19}, if $S(t_0)-\kappa(t_0)I\in C_{NIC}$ for some $t_0\in[0,T]$, then $S(t)-\kappa(t)I\in C_{NIC}$ for all $t\in[t_0,T]$.

Assume now that $(M^n,g(t))_{t\in [0,T]}$ is a complete Ricci flow with nonnegative curvature operator such that $0\leq R\leq C_0$ on $M\times [0,T]$. Then, the curvature operator is bounded on $M\times [0,T]$. Let $\theta\in (0,\frac{1}{2c_n (n-1)})$. Set $\beta:=2\theta c_n\in (0,\frac{1}{n-1})$, $\bar{C}_0=2C_0$ and $\bar{b}(t):=\frac{\beta e^{-2 \bar{C}_0 t}}{1-(n-1) \beta\left(1-e^{-2 \bar{C}_0 t}\right)}$ for $t>0$. Choose $\delta>0$ (depending only on $n,\theta$ and $C_0$) such that $\bar{b}(t)\geq \beta/2$ for all $t\in [0,\delta]$. Suppose $(M^n,g(0))$ is uniformly $\theta$-PIC, so that $\operatorname{Rm}_{g(0)}-\beta R_{g(0)}I\in C_{NIC}$. 

We now apply the maximum principle with avoidance sets \cite[Theorem~12.38]{CCG+08}. 
Let $\mathcal V\to M$ be the vector bundle whose fiber at $p\in M$ is the space
of algebraic curvature tensors on $T_pM$. 
Define 
$$K(t):=\{S\in \mathcal{C}_B(\R^n):S-\bar{b}(t)\operatorname{scal}(S)I\in C_{NIC},S\gtrsim 0\}$$
and $A:=\{S\in \mathcal{C}_B(\R^n):\operatorname{scal}(S)\geq \bar{C}_0\}$ (where $\gtrsim 0$ denotes the nonnegativity of curvature operator). Define $\mathcal{K}(t)\subset \mathcal V$ as follows: an element $S\in \mathcal V_p$ belongs to $\mathcal K(t)_p$ if and only if for some (equivalently, any) linear isometry $\eta_{p,t}:(T_pM,g(t))\to \mathbb R^n$, one has $(\eta_{p,t})_*S\in K(t)$. 
$\mathcal{K}(t)$ is well-defined and invariant under parallel translation as $K(t)$ is $O(n)$-invariant. 
The fibers $\mathcal{K}(t)_p,p\in M$ are closed and convex as $K(t)$ is closed and convex. The spacetime track $$
\mathcal{T} :=\{(v, t) \in \mathcal{V} \times \mathbb{R}: v \in \mathcal{K}(t), 0 \leq t \leq T\}
$$
is closed in $\mathcal{V} \times[0, T]$. Define the avoidance set $\mathcal{A}(t)$ similarly: for each $p\in M$, an element $S\in \mathcal V_p$ belongs to $\mathcal A(t)_p$ if and only if, for some (equivalently, any) linear isometry $\eta_{p,t}:(T_pM,g(t))\to \R^n$, one has $(\eta_{p,t})_*S\in A$.
The avoidance track 
$$\mathcal{A}\mathcal{T}:=\{(v, t) \in \mathcal{V} \times \mathbb{R}: v \in \mathcal{A}(t), 0 \leq t \leq T\}$$
is also closed in $\mathcal{V}\times [0,T]$. 

By the discussion above, we have that for any $x \in {M}$ and initial time $t_0 \in[0, T)$, if a solution $U(t)$ of Hamilton's ODE starts at $U_0\in \mathcal{K}(t_0)_x\setminus \mathcal{A}(t_0)_x$, then either $U(t)\in \mathcal{K}(t)_x$ for all $t\geq t_0$, or there exists $t_1\geq t_0$ such that $U(t)\in \mathcal{K}(t)_x\setminus \mathcal{A}(t)_x$ for all $t\in [t_0,t_1]$ and $U(t_1)\in \mathcal{A}(t_1)_x$. 

The hypotheses of \cite[Theorem~12.38]{CCG+08} are now satisfied, so $\operatorname{Rm}_{g(t)}\in \mathcal{K}(t)$ for all $t\in [0,T]$, i.e. $\operatorname{Rm}_{g(t)}-\bar{b}(t) R_{g(t)}I\in C_{NIC}$ for $t\in [0,T]$. Hence, $(M^n,g(t))$ is uniformly $\bar{b}(t)$-PIC for all $t\in [0,T]$. As a result, $(M^n,g(t))$ is uniformly $\beta/2$-PIC for all $t\in [0,\delta]$. This proves the proposition when $0<\theta\leq \frac{1}{2c_n (n-1)}$. 
\end{proof}
The family of $\kappa$-solutions with $\sph^{n-1}\times \R$ as asymptotic shrinker enjoys the following compactness property, see \cite[Theorem~1.3]{LZ22}. This will be used repeatedly in the proof of Theorem~\ref{thm:full-classification}. 
\begin{lemma}
\label{lem:compactness-result}
Let $n\geq 4$. Let $\left(M_k^n, g_k(t), p_k\right)_{t \in(-\infty, 0]}$, $k\geq 1$, be a sequence of $n$-dimensional $\kappa$-solutions each having $\sph^{n-1}\times \R$ as one of the asymptotic shrinkers. Let $Q_k=R_k\left(p_k, 0\right)$ and $\bar{g}_k(t)=Q_k g_k\left(t Q_k^{-1}\right)$. Then after possibly passing to a subsequence, 
$$\left(M_k^n, \bar{g}_k(t), p_k\right)_{t \in(-\infty, 0]}\to \left(M^n_{\infty}, g_{\infty}(t), p_{\infty}\right)_{t \in(-\infty, 0]},$$
in the smooth Cheeger--Gromov sense where $\left(M^n_{\infty}, g_{\infty}(t), p_{\infty}\right)_{t \in(-\infty, 0]}$ is a $\kappa$-solution that satisfies the following condition: either (i)  $\left(M^n_{\infty}, g_{\infty}(t)\right)_{t \in(-\infty, 0]}$ is the standard shrinking sphere $\mathbb{S}^n$ or (ii) the asymptotic shrinker based at any point in $M^n_{\infty} \times(-\infty, 0]$ is $\mathbb{S}^{n-1} \times \mathbb{R}$. In particular, if the limit $M^n_\infty$ splits, it must be $\mathbb{S}^{n-1} \times \mathbb{R}$. 
\end{lemma}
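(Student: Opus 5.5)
The plan is to combine Perelman's compactness theorem for $\kappa$-solutions with a structural description of solutions that have $\sph^{n-1}\times\R$ as an asymptotic shrinker, and then to identify the asymptotic shrinker of the limit. (In the paper this lemma is cited from \cite[Theorem~1.3]{LZ22}; what follows is how I would argue it directly.)

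\textbf{Step 1 (existence of a limit).} Each $(M_k,\bar g_k(t))$ is a $\kappa$-solution with the same $\kappa$, and $\bar R_k(p_k,0)=1$. Perelman's compactness theorem for $\kappa$-solutions with nonnegative curvature operator (cf. \cite{KL08}) applies in every dimension. The needed curvature bounds come from the Harnack inequality and the local curvature estimates, and noncollapsing gives injectivity radius bounds. So a subsequence converges in the pointed smooth Cheeger--Gromov sense to an ancient solution $(M_\infty,g_\infty(t),p_\infty)$. This limit is $\kappa$-noncollapsed, has bounded nonnegative curvature operator, and is nonflat because $R_\infty(p_\infty,0)=1$. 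Hence it is a $\kappa$-solution.

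\textbf{Step 2 (identifying the asymptotic shrinker).} By the classification of shrinkers with bounded nonnegative curvature operator, every asymptotic shrinker of $M_\infty$ is a finite quotient of $\sph^m\times\R^{n-m}$ with $2\le m\le n$. I must show that either $M_\infty$ is the round shrinking sphere, or $m=n-1$ with trivial quotient.

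\begin{itemize}
\item First I would use Perelman's reduced volume. For each $k$, $\tilde V_k(\tau)$ is nonincreasing and tends to $\Theta(\sph^{n-1}\times\R)$, so $\tilde V_k(\tau)\ge\Theta(\sph^{n-1}\times\R)$ for all $\tau$. After choosing base points consistently, I would pass this bound to the limit using locally uniform convergence of $\ell_k$ and the Gaussian decay estimate $\ell\ge d^2/(C\tau)-C$, which controls the tails of the integrals. This gives $\Theta_\infty\ge\Theta(\sph^{n-1}\times\R)$. Stone's computation orders the entropies of the cylinders $\sph^m\times\R^{n-m}$, and this bound rules out the shrinker $\sph^n$ (and its quotients) for a noncompact-type limit.
\item To exclude $m\le n-2$, I would use the neck--cap structure of \cite{LZ22}. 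Every point of $M_k$ outside a cap region of bounded normalized size is the center of an $\varepsilon$-neck modeled on $\sph^{n-1}\times\R$. This structure is scale-invariant, so it passes to $M_\infty$ at all scales and all times. In particular, at very negative times and large scales around $p_\infty$, $M_\infty$ looks like $\sph^{n-1}\times\R$ or a cap. That is incompatible with an asymptotic shrinker $\sph^m\times\R^{n-m}$ with $n-m\ge2$. The structure is also incompatible with a nontrivial quotient, since necks are $\sph^{n-1}\times\R$ with $\sph^{n-1}$ simply connected.
\item If instead the caps of $M_k$ have normalized size tending to infinity, I would argue that $M_\infty$ is compact and close to round. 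By the pinching of \cite{LZ22} in such regions, or by Böhm--Wilking/Brendle--Schoen convergence, it must be the shrinking sphere.
\end{itemize}

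\textbf{Step 3.} Finally, if $M_\infty$ splits a line, it cannot be the compact sphere. Its asymptotic shrinker, $\sph^{n-1}\times\R$, then forces the cross-section to be a round shrinking $\sph^{n-1}$, so $M_\infty=\sph^{n-1}\times\R$.

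The main obstacle is Step 2: transferring the asymptotic (as $\tau\to\infty$) information of the $M_k$ to the limit. The convergence is only local, and the time at which each $M_k$ becomes cylinder-like is not uniform in $k$. This is why I rely on the scale-invariant neck--cap structure of \cite{LZ22}, which holds uniformly at all scales, rather than on the asymptotics alone.
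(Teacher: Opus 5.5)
\textbf{Main gap: Step~1.} The paper does not prove this lemma; it quotes \cite[Theorem~1.3]{LZ22}. The remark after the lemma stresses that the boundedness of the curvature of the limit is exactly where the $\sph^{n-1}\times\R$ hypothesis enters. Your Step~1 passes over this point.

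Perelman's compactness theorem is a three-dimensional result. In every dimension, the local curvature estimate and the Harnack inequality only give a smooth pointed limit whose curvature is bounded on compact sets. To show the limit has \emph{bounded} curvature, Perelman argues as follows:
\begin{itemize}
\item blow-ups at spatial infinity split a line;
\item since two-dimensional $\kappa$-solutions are round, these blow-ups are necks;
\item necks whose radius tends to zero cannot sit at infinity in a complete manifold with nonnegative curvature.
\end{itemize}
For $n\ge4$ the split cross-section is an arbitrary $(n-1)$-dimensional ancient solution, so the second step fails unless the hypothesis is used.

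Without bounded curvature you do not know that $M_\infty$ is a $\kappa$-solution. Theorem~\ref{thm:existence-of-asymptotic-shrinker} then gives you no asymptotic shrinker to identify in Step~2. A plausible repair uses your own reduced-volume idea:
\begin{itemize}
\item use the bound $\tilde V^{M_k}_{(x,s)}(\tau)\ge\Theta(\sph^{n-1}\times\R)$ at every basepoint;
\item pass it to bounded-curvature blow-ups at infinity, forcing their split cross-sections to be round, i.e.\ the blow-ups are necks;
\item run Perelman's neck-radius argument at time $0$;
\item use $\partial_tR\ge0$, inherited from the $M_k$, for earlier times.
\end{itemize}

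\textbf{Step~2 has three further problems.}

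(a) \emph{The entropy comparison runs the wrong way.} With $\Ric=\frac12 g$ one gets
\[
\Theta(\sph^k)=|\sph^k|\Bigl(\frac{k-1}{2\pi e}\Bigr)^{k/2},
\]
which equals $2/e,\ 2\sqrt\pi\,e^{-3/2},\ 6e^{-2},\dots$ for $k=2,3,4,\dots$ and increases with $k$. So $\Theta(\sph^n)>\Theta(\sph^{n-1}\times\R)$, and your lower bound cannot exclude $\sph^n$. It should not, since alternative (i) allows the round sphere. What the bound does exclude, once the limit is a $\kappa$-solution, are lower-density shrinkers: $\sph^m\times\R^{n-m}$ with $m\le n-2$, and nontrivial quotients of the cylinder. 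That is a cleaner route than your second bullet. Also, your classification is inaccurate: shrinkers with nonnegative curvature operator include compact symmetric factors such as $\sph^2\times\sph^2$ or $\mathbb{CP}^k$, not only quotients of $\sph^m\times\R^{n-m}$.

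(b) \emph{Roundness is not proved.} You still need that if the asymptotic shrinker of $M_\infty$ is $\sph^n$, then the solution itself is the round shrinking sphere. The same issue arises in Step~3, where the split factor has shrinker $\sph^{n-1}$. This needs a genuine pinching argument, e.g.\ Böhm--Wilking cones on a compact solution that is nearly round at very negative times. Your bullet about caps of unbounded normalized size does not supply this.

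(c) \emph{Using the neck--cap decomposition is circular.} The decomposition of \cite{LZ22} (Lemma~\ref{lem:can-neighborhood}) obtains its uniform constant $C(n,\varepsilon,\kappa)$ by a contradiction-and-compactness argument resting on this very lemma. This mirrors Perelman's three-dimensional theory, where the neck--cap structure is deduced from compactness. Moreover, transferring that structure to $M_\infty$ at all times and scales presupposes the uniformity in $k$ that you yourself identify as the main obstacle.
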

As remarked in \cite{LZ22}, the fact that $g_\infty(t)$ has bounded curvature follows from the assumption that each $M^n_k$ has $\sph^{n-1}\times \R$ as one of the asymptotic shrinkers. 

We end this section with an application of Hamilton's strong maximum principle.
\begin{lemma}
\label{lem:strong-max-principle-line-or-positive-curvature}
Let $(M^n,g(t))_{t\leq 0},n\geq 4$ be a $\kappa$-solution with asymptotic shrinker $\sph^{n-1}\times \R$. Then either $(M,g(t))_{t\leq 0}$ has positive curvature operator, or $(M,g(t))_{t\leq 0}$ is the round cylinder $\sph^{n-1}\times \R$.
\end{lemma}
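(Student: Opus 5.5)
We plan to apply Hamilton's strong maximum principle for the curvature operator under Ricci flow. Since $(M^n,g(t))_{t\leq 0}$ is a $\kappa$-solution, it has bounded nonnegative curvature operator. Hamilton's strong maximum principle then gives a dichotomy. Either the curvature operator is strictly positive on $M\times(-\infty,0]$, or there is a nontrivial kernel. In the second case, for every $t<0$ (and hence, by ancientness, for all $t\leq 0$) the image of the curvature operator is a Lie subalgebra of $\mathfrak{so}(n)$ that is invariant under parallel translation and constant in time. Consequently the holonomy of $g(t)$ reduces. By de Rham's decomposition, the universal cover $(\widetilde M,\tilde g(t))$ splits isometrically as a product $N_1\times N_2$ of Ricci flows. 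By Lemma~\ref{lem:asymptotic-shrinker-of-universal-cover}, we may work on $\widetilde M$, which is again a $\kappa$-solution with asymptotic shrinker $\sph^{n-1}\times\R$.

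Suppose $\widetilde M$ splits nontrivially, with $\widetilde M=N_1^{k}\times N_2^{n-k}$ and $1\leq k\leq n-1$. Our plan is to show that the factors must be $\sph^{n-1}$ and $\R$. Asymptotic shrinkers of a product are products of asymptotic shrinkers. More precisely, the reduced distance of a product flow is the sum of the reduced distances of the factors, up to the normalization. So a limit as in Theorem~\ref{thm:existence-of-asymptotic-shrinker} splits as a product of limits of the two factors, each of which is a gradient shrinker or flat space. Any flat factor of dimension $j$ contributes an $\R^j$ to the asymptotic shrinker. The shrinker is $\sph^{n-1}\times\R$, whose de Rham decomposition is exactly $\sph^{n-1}\times\R$. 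Hence one factor, say $N_2$, must be one-dimensional, so $N_2=\R$. Nonnegative curvature and the fact that a compact one-dimensional factor $\sph^1$ would violate $\kappa$-noncollapsing at large scales rule out $\sph^1$ in $\widetilde M$; in any case $\widetilde M$ is simply connected. The other factor $N_1^{n-1}$ is then an $(n-1)$-dimensional $\kappa$-solution whose asymptotic shrinker is the round $\sph^{n-1}$.

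The remaining step is to show that an ancient $\kappa$-solution whose asymptotic shrinker is a round sphere is itself a family of round shrinking spheres. We would argue as follows. The asymptotic shrinker being compact forces $N_1$ to be compact, by the usual argument that noncompact $\kappa$-solutions have noncompact asymptotic shrinkers. For $\tau$ large, $g(-\tau)/\tau$ is then close to round, so it is uniformly pinched. We then invoke the pinching monotonicity for closed Ricci flows with positive curvature operator. Alternatively, we apply the Böhm--Wilking / Brendle--Schoen pinching estimates, or Perelman's volume-ratio argument, along the backward limit; these show that the curvature pinching ratio $|\mathrm{Rm}-\frac{R}{n(n-1)}I|/R$ tends to $0$ as $t\to-\infty$ and is nonincreasing in the scale-invariant sense. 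This gives roundness for all times. This step is the main obstacle; we would first try the closed-manifold result that an ancient solution which is asymptotically round at $-\infty$ must be round. That result follows from Huisken/Hamilton's pinching estimate $|\mathring{\mathrm{Rm}}|\leq C R^{1-\sigma}$ applied on $[-\tau,0]$ with $\tau\to\infty$.

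Therefore $\widetilde M=\sph^{n-1}\times\R$ with the shrinking cylinder metric. Finally, $M$ is a quotient of it, and its asymptotic shrinker is the round cylinder itself rather than a quotient. Together with the $\kappa$-noncollapsing, this forces the deck group to be trivial: a nontrivial quotient would appear in the blow-down, since the deck group acts at fixed scale on the $\sph^{n-1}$ factor or by translation along $\R$, and the latter collapses. Thus $M$ is the round cylinder, completing the dichotomy.
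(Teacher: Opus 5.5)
Your proposal has a genuine gap at the step from the strong maximum principle to a product splitting. When $\operatorname{Rm}$ has a kernel, Hamilton's strong maximum principle tells you that the image of $\operatorname{Rm}$ is a parallel, time-independent, proper Lie subalgebra of $\mathfrak{so}(n)$. In other words, the holonomy algebra is proper. But a proper holonomy group need not act reducibly on $T_p\widetilde M$, so de Rham's theorem does not give $\widetilde M=N_1\times N_2$.

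By Berger's classification, a simply connected irreducible $\widetilde M$ with $\operatorname{Rm}\geq 0$ but not $\operatorname{Rm}>0$ can still be one of two kinds, and neither splits:
\begin{itemize}
\item an irreducible K\"ahler manifold with holonomy $U(n/2)$, e.g.\ the shrinking $\mathbb{CP}^{n/2}$;
\item an irreducible symmetric space of compact type, e.g.\ $\mathbb{HP}^m$, Grassmannians, or simple compact Lie groups.
\end{itemize}
As written, your argument would wrongly conclude that these split. This is exactly the case the paper treats separately, using the Chow--Lu--Ni decomposition into flat, irreducible Einstein symmetric, positively curved, and K\"ahler factors:
\begin{itemize}
\item A symmetric space of compact type is Einstein. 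The flow is then homothetically shrinking, and its asymptotic shrinker is the compact symmetric space itself, not $\sph^{n-1}\times\R$.
\item In the K\"ahler case, the parallel complex structure is preserved by the flow and passes to the Cheeger--Gromov limit. The asymptotic shrinker would then be K\"ahler, which $\sph^{n-1}\times\R$ is not: a parallel $J$ would have to send the parallel field $\partial_s$ to a real multiple of itself.
\end{itemize}
You need this case analysis before invoking de Rham.

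Once a genuine nontrivial splitting is available, your argument is a legitimate and more self-contained replacement for the paper's appeal to Li--Zhang. The steps go as follows:
\begin{itemize}
\item Additivity $\ell=\ell_1+\ell_2$, together with uniqueness of the de Rham decomposition of $\sph^{n-1}\times\R$, forces one factor to be $\R$.
\item The other factor inherits $\kappa/2$-noncollapsing from $N_1\times\R$.
\item It is compact, because its pointed blow-down limit $\sph^{n-1}$ is compact.
\item It is round: apply Huisken's pinching estimate (Hamilton's when $n-1=3$) from the nearly round times $-\tau_i$. After rescaling this gives $|\mathring{\operatorname{Rm}}|\leq C\tau_i^{-\sigma}R^{1-\sigma}$ at later times, hence $\mathring{\operatorname{Rm}}\equiv 0$.
\end{itemize}
State this last step explicitly along the sequence $\tau_i$, since you only know closeness to round there, and track the scaling of the constant. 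That is cleaner than listing several alternative mechanisms. Your final deck-group argument is at the same level of detail as the paper's.
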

\begin{proof}
Let $\pi:\widetilde M\to M$ be the universal cover, and let $\tilde g(t):=\pi^*g(t)$. By Lemma~\ref{lem:asymptotic-shrinker-of-universal-cover}, $(\widetilde M,\tilde g(t))_{t\le 0}$ is a $\kappa$-solution whose asymptotic shrinker is $\sph^{n-1}\times \R$. If $(\widetilde M,\tilde g(t))_{t\le 0}$ has positive curvature operator, then so does $(M,g(t))_{t\le 0}$, since $\pi$ is a local isometry for each $t$.

Suppose that $(\widetilde M,\tilde g(t))_{t\le 0}$ fails to have positive curvature operator. Fix $t_0<0$. Since $(\widetilde M,\tilde g(t))$ is simply connected, complete, and has nonnegative curvature operator, \cite[Exercise~7.36]{CLN06} implies that $(\widetilde M,\tilde g(t_0))$ splits isometrically as
\begin{equation}
\label{eqn:1-strong-max-principle-line-or-positive-curvature}
(\widetilde M,\tilde g(t_0)) \cong \R^k \times N_1\times \cdots \times N_r,
\end{equation}
where each $N_i$ is either an irreducible nonflat closed Einstein symmetric space with $\operatorname{Rm}\ge 0$, a manifold with positive curvature operator, or a K\"ahler manifold with positive curvature operator on real $(1,1)$-forms. By uniqueness for the Ricci flow on complete bounded-curvature manifolds, this product decomposition persists for $t\in [t_0,0]$, and since $t_0<0$ was arbitrary, for all $t\le 0$.

If the product decomposition \eqref{eqn:1-strong-max-principle-line-or-positive-curvature} has only one factor, then there are two possibilities. First, $(\widetilde M,\tilde g(t_0))$ could be an irreducible closed Einstein symmetric space. In that case it follows that $(\widetilde M,\tilde g(t))_{t\le 0}$ is a shrinking Einstein soliton. Hence its asymptotic shrinker is the same compact Einstein symmetric space, not $\sph^{n-1}\times\R$, a contradiction. Second, $(\widetilde M,\tilde g(t))_{t\leq 0}$ could be K\"ahler. Since the K\"ahler condition is preserved by the Ricci flow, every asymptotic shrinker would then be K\"ahler, which is impossible because the round cylinder $\sph^{n-1}\times\R$ is not K\"ahler for $n\ge 4$. 

Thus, $(\widetilde M,\tilde g(t_0))$ splits nontrivially as a Riemannian product. As noted in \cite{LZ22}, if a $\kappa$-solution with asymptotic shrinker $\sph^{n-1}\times \R$ splits, then it must be the round cylinder $\sph^{n-1}\times \R$. Hence $(\widetilde M,\tilde g(t))$ is isometric to $\sph^{n-1}\times \R$ showing that $(M,g(t))$ is a quotient of $\sph^{n-1}\times \R$. Since the asymptotic shrinker of $(M,g(t))_{t\leq 0}$ is $\sph^{n-1}\times \R$, the quotient must be trivial. Therefore $(M,g(t))$ is isometric to the round cylinder $\sph^{n-1}\times \R$.
\end{proof}

\section{Noncompact \texorpdfstring{$\kappa$}{kappa}-solutions}
\label{sec:noncompact-case}
Let $n\geq 4$. In this section we treat noncompact $\kappa$-solutions with asymptotic shrinker $\sph^{n-1}\times\R$. We prove that such solutions are uniformly PIC, thereby establishing Theorem~\ref{thm:main-result} in the noncompact case. 

We have the following cap-neck decomposition theorem due to \cite{LZ22}. This implies that the non-neck-like region has bounded geometry.  
\begin{lemma}
\label{lem:can-neighborhood}
There exists $\varepsilon_0>0$ such that for every $\varepsilon\in(0,\varepsilon_0)$, there exists $C=C(n,\varepsilon,\kappa)<\infty$ with the following property. Let $(M^n,g(t))_{t\leq 0}$ be a noncompact $\kappa$-solution with positive curvature operator, and with asymptotic shrinker $\sph^{n-1}\times \R$. For $t\leq 0$, define $$\Om_\varepsilon(t)=\{x\in M^n:x \text{ is not an }\varepsilon\text{-neck in }(M^n,g(t))\}.$$ Then, for all $t\leq 0$, $\Om_\varepsilon(t)$ is nonempty, compact, and 
\begin{equation}
\label{eqn:can-neighborhood-cap-diameter-estimate-l}
\sup_{o\in \Om_\varepsilon(t)}R(o,t)\operatorname{diam}_{g(t)}(\Om_\varepsilon(t))^2\leq C.
\end{equation}
\end{lemma}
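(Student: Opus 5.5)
Taken together, the statement says three things: $\Om_\varepsilon(t)$ is nonempty, it is compact, and it obeys the scale-invariant diameter bound \eqref{eqn:can-neighborhood-cap-diameter-estimate-l}. Since the lemma is attributed to \cite{LZ22}, I would reduce to their canonical-neighborhood theory and reprove only the diameter estimate, which is the part used later. The tool is a contradiction-compactness argument based on Lemma~\ref{lem:compactness-result}. Alongside it I use two basic structural facts about a noncompact $\kappa$-solution with positive curvature operator. First, it is diffeomorphic to $\R^n$ (Gromoll--Meyer/soul theorem). Second, by Perelman's neck-like-at-infinity property, together with the fact that every blow-down and asymptotic shrinker is $\sph^{n-1}\times\R$, the points outside a sufficiently large compact set of each time slice are centers of $\varepsilon$-necks.

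\emph{Compactness and nonemptiness.} Fix $t$. That $\Om_\varepsilon(t)$ is closed is immediate from the definition, because being the center of an $\varepsilon$-neck is an open condition in smooth closeness. It is bounded because every point far out is neck-like: take a sequence $x_k\to\infty$ and rescale by $R(x_k,t)$. Lemma~\ref{lem:compactness-result}, combined with a splitting argument (Toponogov: the limit contains a line, since rays from a fixed base point subtend angle tending to zero), gives a limit that splits. It is therefore $\sph^{n-1}\times\R$, so $x_k$ is an $\varepsilon$-neck center for large $k$. For nonemptiness, suppose every point were an $\varepsilon$-neck center. Then the $\varepsilon$-necks would fit together into a global neck structure, making $M$ diffeomorphic to $\sph^{n-1}\times\R$, or at least forcing two ends. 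This contradicts $M\cong\R^n$, which has one end.

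\emph{Diameter bound.} Suppose the bound fails. Then there are $\kappa$-solutions $(M_k,g_k(t))$, times $t_k$ (shift them to $0$) and points $o_k\in\Om_\varepsilon(0)$ with $R_k(o_k,0)\operatorname{diam}(\Om_\varepsilon)^2\to\infty$. Normalize so that $R_k(o_k,0)=1$ and pass to a limit using Lemma~\ref{lem:compactness-result}. Let $o_k'\in\Om_\varepsilon$ be a point at distance $\geq \frac12 \operatorname{diam}(\Om_\varepsilon)\to\infty$ from $o_k$. I then consider two regimes.
\begin{itemize}
\item If $R_k(o'_k)d(o_k,o'_k)^2\to\infty$, rescale around $o'_k$ instead. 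This is a valid choice, because Lemma~\ref{lem:compactness-result} allows any base point. Now $o_k$ recedes to infinity at the new scale, and the Toponogov argument again produces a split limit $\sph^{n-1}\times\R$, contradicting $o'_k\notin$ necks.
\item If instead $R_k(o'_k)d(o_k,o'_k)^2$ stays bounded while $d\to\infty$ in the $o_k$-scale, then the curvature at $o'_k$ tends to $0$. Hence the limit around $o_k$ has a sequence of points, going to infinity, where the rescaled curvature is bounded and which are non-neck points. Rescaling at those points yields, in the same way, a splitting limit, again a contradiction.
\end{itemize}
Uniformity of $C$ over all $t$ and all such solutions comes precisely from the fact that the argument is by contradiction over arbitrary sequences.

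\emph{Main obstacle.} The delicate step is the splitting argument. It requires the two non-neck points to separate by distances going to infinity relative to the curvature scale at the base point, and it requires that the limit on each side does not degenerate. I would handle this by choosing base points to maximize $R$ among far-apart non-neck points, with a point-picking argument in the style of Perelman's bounded curvature at bounded distance, and by invoking \cite[Theorem~1.3]{LZ22} for the bounded-curvature limit. In practice I would cite the cap-neck decomposition of \cite{LZ22} directly for this step.
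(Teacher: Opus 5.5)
\textbf{Nonemptiness and the fallback citation.} Your nonemptiness argument is essentially the paper's. The paper also uses the fact that an everywhere-neck noncompact $M$ is foliated by spheres (Hamilton's CMC foliation), hence $M\cong\sph^{n-1}\times\R$ with two ends. It then gets the contradiction from the Cheeger--Gromoll splitting theorem and positivity of the curvature operator, whereas you contradict $M\cong\R^n$ via Gromoll--Meyer; either ending works. For compactness and the estimate \eqref{eqn:can-neighborhood-cap-diameter-estimate-l} the paper gives no argument and simply cites \cite[Theorem~1.4]{LZ22}. So if you fall back on citing \cite{LZ22}, your proof is the paper's.

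\textbf{The self-contained diameter argument has a gap.} As written, it does not work.
\begin{itemize}
\item In your first regime you conclude that the limit at $o_k'$ splits only because $o_k$ recedes to infinity. One receding point produces a ray, not a line, and this step never uses $o_k\in\Om_\varepsilon$. A counterexample to the inference: in the Bryant soliton, take $o_k'$ to be the tip and $o_k$ a point on the neck at distance $r_k\to\infty$. Then $R(o_k)\,d(o_k,o_k')^2\sim r_k\to\infty$ and $R(o_k')\,d(o_k,o_k')^2\sim r_k^2\to\infty$, yet the limit at $o_k'$ is the Bryant soliton, which does not split.
\item In your compactness paragraph, the second direction needed for a line comes from the large-scale geometry of one fixed manifold. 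What is really needed there is $R(x_k)d(p,x_k)^2\to\infty$ plus a point beyond $x_k$ in the radial direction, not that rays subtend vanishing angles. Nothing of this kind is available uniformly at finite scales along a sequence of different solutions.
\item In your second regime, the points $o_k'$ leave every bounded ball around $o_k$, so they are not points of the pointed limit. The statement about non-neck points going to infinity in the limit therefore has no content.
\end{itemize}

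\textbf{What a repair would need.} The second regime is in fact excluded directly. Rescale at $o_k'$: then $o_k$ stays at bounded distance while its rescaled curvature $R(o_k)/R(o_k')$ blows up, contradicting the bounded curvature at bounded distance given by Lemma~\ref{lem:compactness-result}. The same reasoning shows that at the midpoint $z_k$ of a minimizing geodesic from $o_k$ to $o_k'$, both endpoints recede in opposite directions, so the limit at $z_k$ is a cylinder. But this only yields a long neck separating two non-neck regions. You would still need a global argument ruling out two disjoint caps in a noncompact manifold with positive curvature operator. That missing step is exactly what the citation of \cite[Theorem~1.4]{LZ22} supplies, so your sketch should not be presented as a proof.
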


\begin{proof}
We first show that $\Omega_\varepsilon(t)$ is nonempty. Suppose instead that $\Omega_\varepsilon(t_0)=\emptyset$ for some $t_0\le 0$. Then every point of $(M,g(t_0))$ is the center of an $\varepsilon$-neck. If $\varepsilon<\varepsilon_0(n)$ is sufficiently small, Hamilton's constant-mean-curvature (CMC) foliation implies that $M$ is foliated by leaves diffeomorphic to $\sph^{n-1}$. Since $M$ is complete and noncompact, it follows that $M$ is diffeomorphic to $\sph^{n-1}\times \R$, and in particular has two ends. Because $\Ric_{g(t_0)}\ge 0$, the Cheeger--Gromoll splitting theorem implies that $(M,g(t_0))$ splits off a line isometrically. This contradicts the assumption that $(M,g(t_0))$ has positive curvature operator.

The remaining assertions follow from \cite[Theorem 1.4]{LZ22}.
\end{proof}
We next show that the caps in $M^n$ must look like the Bryant soliton at certain large negative times after suitable rescaling. 
\begin{lemma}
\label{lem:type-II}
Let $n\geq 4$. Let $(M^n,g(t))_{t\leq 0}$ be a noncompact $\kappa$-solution, with positive curvature operator, and with asymptotic shrinker $\sph^{n-1}\times \R$. There exists $\varepsilon_1>0$ and a sequence of spacetime points $(\hat{p}_k,\hat{t}_k)$ such that $\hat{p}_k\in \Om_\varepsilon(\hat{t}_k)$ for all $\varepsilon\in (0,\varepsilon_1)$, $\hat{t}_k\to -\infty$, and 
\begin{equation}
\label{eqn:convergence-to-Bryn}
\left(M^n,R(\hat{p}_k,\hat{t}_k)g\left(\frac{t}{R(\hat{p}_k,\hat{t}_k)}+\hat{t}_k\right),\hat{p}_k\right)_{t\in (-\infty,\infty)}\to (\operatorname{Bry}^n,g_\infty(t),\hat{x})_{t\in (-\infty,\infty)},
\end{equation}
where the convergence is in the smooth pointed Cheeger--Gromov sense on compact time intervals in $(-\infty,\infty)$, $g_\infty(t)$ is the canonical Ricci flow on $\operatorname{Bry}^n$, and $\hat{x}$ is the tip of $\operatorname{Bry}^n$. 
\end{lemma}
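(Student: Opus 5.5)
This is a Type II blow-up argument based on Hamilton's rescaling, combined with the compactness result Lemma~\ref{lem:compactness-result} and Hamilton's classification of steady solitons. The first step is to show that the noncompact solution is of type II backwards in time, meaning
\[
\sup_{M\times(-\infty,0]} |t|\,R(x,t)=\infty .
\]
Suppose instead that $|t|R\le C$. Then, for a fixed $p_0$, pick a sequence $\tau_i\to\infty$. The rescalings $\tau_i^{-1}g(\tau_i t)$ based at $p_0$ have bounded curvature on $t\le -1/2$. In addition, $R(p_0,t)\,d_{g(t)}(p_0,\Om_\varepsilon(t))^2$ stays bounded along the neck region, since the necks emanate from the cap by Lemma~\ref{lem:can-neighborhood}. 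Using the cap-diameter estimate \eqref{eqn:can-neighborhood-cap-diameter-estimate-l}, we would place the basepoint at cap points $o_t\in\Om_\varepsilon(t)$ with $\ell$ bounded nearby. The rescaled limits should then both be the asymptotic shrinker $\sph^{n-1}\times\R$ and contain a cap of bounded normalized diameter that persists in the limit, because $\Om_\varepsilon$ is preserved under smooth convergence. Every point of the cylinder is an $\varepsilon$-neck, so this is a contradiction.

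Concretely, I would take the contradiction through Lemma~\ref{lem:compactness-result} applied to the rescalings at $(o_{t_k},t_k)$ with $t_k\to-\infty$. A type I bound together with Perelman's $\ell$-estimates gives $R(o_{t_k},t_k)|t_k|\simeq 1$, and $d(q_{\tau_k},o_{t_k})^2/|t_k|$ is bounded. This is the delicate step, and I would justify it by the bounded geometry of the non-neck region: a reduced-distance-bounded point lies at normalized distance $O(1)$ from the cap, because the curvature at $o$ is comparable to that of nearby necks.

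Given type II, I would apply Hamilton's point-picking. Choose $T_k\to\infty$ and points $(\hat p_k,\hat t_k)$ with $\hat t_k\in[-T_k,-1]$ that nearly maximize $(T_k+t)(-t)R(x,t)$ — this is the standard ancient type II selection. Rescaling by $Q_k=R(\hat p_k,\hat t_k)$ yields flows defined on $[-\alpha_k,\omega_k]$ with $\alpha_k,\omega_k\to\infty$ and curvature bounded by $1+o(1)$. Lemma~\ref{lem:compactness-result} extends to this time-shifted setting, since each rescaled flow is still a $\kappa$-solution with the same asymptotic shrinker up to time shift. It therefore gives an eternal $\kappa$-solution limit with $\sup R=R(\hat x,0)=1$. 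By Hamilton's Harnack theorem (with positive curvature operator), this limit is a steady gradient soliton. By Lemma~\ref{lem:strong-max-principle-line-or-positive-curvature} applied to the limit, it is either a cylinder or has positive curvature operator. A cylinder is impossible because the cylinder's curvature is not attained at a maximum in eternal time: a shrinking cylinder is not eternal. The limit is noncompact because $\hat t_k\to-\infty$ and the caps have diameters comparable to $R^{-1/2}$ while the manifold is noncompact. Finally, a noncompact $\kappa$-noncollapsed steady soliton with positive curvature operator whose asymptotic shrinker is $\sph^{n-1}\times\R$, being uniformly PIC by Lemma~\ref{lem:uniform-theta} arguments, is the Bryant soliton by Brendle--Naff/Li--Zhang.

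It remains to check that $\hat p_k\in\Om_\varepsilon(\hat t_k)$ and that $\hat x$ is the tip. Since $R$ attains its maximum at the tip of $\Bry^n$, and $\hat x$ maximizes $R(\cdot,0)$ in the limit, $\hat x$ is the tip. Points near the tip are not $\varepsilon$-necks for $\varepsilon<\varepsilon_1$, where $\varepsilon_1$ is chosen depending only on the Bryant soliton. Smooth convergence then gives $\hat p_k\in\Om_\varepsilon(\hat t_k)$ for large $k$.

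I expect the main obstacle to be the first step, ruling out type I behavior, together with the classification of the steady soliton limit without circularity. For the latter I would first try the Brendle–Naff classification of steady solitons, using that the limit is uniformly PIC near infinity via the neck structure.
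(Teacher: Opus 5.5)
The genuine gap is your first step, ruling out Type~I behaviour. The paper does not prove this by hand. It quotes \cite[Corollary~1.2]{LR24}: a noncompact $\kappa$-solution with positive sectional curvature is Type~II.

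Your substitute argument asserts exactly what has to be shown. Under the Type~I assumption you claim two things: that $R(o_t,t)|t|\simeq 1$ at cap points, and that $d_{g(t)}(q_\tau,\Om_\varepsilon(t))^2/|t|$ stays bounded. Neither follows from what you cite:
\begin{itemize}
\item The Type~I hypothesis gives only the upper bound $R|t|\le C$.
\item Lemma~\ref{lem:can-neighborhood} bounds the diameter of $\Om_\varepsilon(t)$ relative to its own curvature scale. It says nothing about where the cap sits relative to $p_0$ or to the $\ell$-center.
\item Comparable curvature does not bound distance. A neck of radius $\sim\sqrt{|t|}$ and normalized length tending to infinity between $q_\tau$ and the cap is consistent with everything you state.
\end{itemize}
In fact, smooth convergence to the full cylinder based at $(q_\tau,-1)$ forces $\Om_\varepsilon(-\tau)$ to lie at normalized distance $\to\infty$ from $q_\tau$. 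So your boundedness claim \emph{is} the desired contradiction, not a step toward it.

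This step cannot be bypassed. A limit defined for all $t\in(-\infty,\infty)$ at times $\hat t_k\to-\infty$ requires $(-\hat t_k)R(\hat p_k,\hat t_k)\to\infty$, which is exactly Type~II. You need either a real argument excluding Type~I solutions in this class, or the cited theorem.

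The rest of your outline matches the paper's proof:
\begin{itemize}
\item point-picking as in \cite[Proposition~9.7]{Bre20};
\item Lemma~\ref{lem:compactness-result} to obtain a noncompact eternal limit with $R\le 1=R(\hat x,0)$;
\item Lemma~\ref{lem:strong-max-principle-line-or-positive-curvature} to exclude the cylinder;
\item Hamilton's steady soliton theorem;
\item identification of $\hat x$ as the tip.
\end{itemize}
Your classification of the soliton via uniform PIC plus Brendle--Naff, in place of the paper's \cite[Theorem~5.2]{CMZ25a}, is a valid alternative, but you should spell it out. Lemma~\ref{lem:can-neighborhood} applied to the soliton gives a compact non-neck region, where positive curvature operator yields some PIC constant. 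For small $\varepsilon$ the neck region is $\theta_n/2$-pinched. Every time slice of a steady soliton is isometric to the time-$0$ slice, so the pinching is uniform in time.
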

\begin{proof}
Since $M$ has positive sectional curvature, it follows from \cite[Corollary~1.2]{LR24} that the solution is Type II: 
\begin{equation}
\label{eqn:-type-II-equation}
\sup_{(x,t)\in M\times(-\infty,0]} (-t)\,R(x,t)=\infty.
\end{equation}
Following exactly as in \cite[Proposition~9.7]{Bre20}, we may choose points $\left.(\hat{p}_k, \hat{t}_k\right.) \in M \times(-k, 0)$, such that $\hat{t}_k\to -\infty$, $\left.(-\hat{t}_k\right.) R\left.(\hat{p}_k, \hat{t}_k\right.) \rightarrow \infty$ and
$$
\left(M^n,\;R(\hat{p}_k,\hat{t}_k)\,g\!\left(\frac{t}{R(\hat{p}_k,\hat{t}_k)}+\hat{t}_k\right),\;\hat{p}_k\right)
\to
\left(N^n,\;g_\infty(t),\;\hat{x}\right)_{t\in \mathbb{R}},
$$
where $(N^n,g_\infty(t))$ is an eternal $\kappa$-solution such that $R_{g_\infty(t)}\leq 1$ on $N^n$ for all $t\in \R$ and $R_{g_{\infty}(0)}(\hat{x})=1$. On the other hand, Lemma~\ref{lem:compactness-result} shows that either (i) $(N^n,g_\infty(t))$ is isometric to standard shrinking sphere $\sph^n$ or (ii) an asymptotic shrinker of $(N^n,g_\infty(t))$ is $\sph^{n-1}\times \R$. Since each rescaled flow is defined on the complete, connected, noncompact manifold $M^n$, any pointed Cheeger--Gromov limit is again noncompact. Therefore, case (i) cannot occur. 

If $N^n$ fails to have positive curvature operator, then by Lemma~\ref{lem:strong-max-principle-line-or-positive-curvature}, $N^n$ is isometric to the standard shrinking cylinder $\sph^{n-1}\times \R$, which is not eternal. This shows that $(N^n,g_\infty(t))$ has positive curvature operator. By \cite{Ham93}, $N^n$ is a steady soliton. By \cite[Theorem~5.2]{CMZ25a} (which is based on \cite{Bre14} and classifies positively curved steady solitons with $\sph^{n-1}\times \R$ as the asymptotic shrinker), it follows that $N^n$ is isometric to Bryant soliton $\operatorname{Bry}^n$. As $\hat{x}$ corresponds to the maximum point of scalar curvature in space, $\hat{x}$ is the tip of the Bryant soliton. Hence, there exists $\varepsilon_1>0$ such that for every $\varepsilon \in\left(0, \varepsilon_1\right), \hat{p}_k \in \Omega_{\varepsilon}(\hat{t}_k)$ for all large $k$. 
\end{proof}
The previous lemma identifies the limit of the rescaled flows based at $(\hat p_k,\hat t_k)$. We next classify all possible limits of the rescaled flows at times $\hat{t}_k$. 
\begin{lemma}
\label{lem:limits-at-hat-t-k}
Let $n\geq 4$. Let $(M^n,g(t))_{t\leq 0}$ be a noncompact $\kappa$-solution, with positive curvature operator, and with asymptotic shrinker $\sph^{n-1}\times \R$. Let $\hat{t}_k$ be as in Lemma~\ref{lem:type-II}. Given any $q_k\in M$, we may pass to a subsequence to ensure  
\begin{equation}
\label{eqn:limits-at-hat-t-k}
\left(M^n,g_k(t):=R\left(q_k,\hat{t}_k\right)g\left(\frac{t}{R(q_k,\hat{t}_k)}+\hat{t}_k\right),q_k\right)_{t\in (-\infty,0]}\to \left(N^n,g_\infty(t),x_\infty\right)_{t\in (-\infty,0]},
\end{equation}
in the smooth Cheeger--Gromov sense, where $(N^n,g_\infty(t))$ is either the Bryant soliton $\operatorname{Bry}^n$, or the round shrinking cylinder $\sph^{n-1}\times \R$. 
\end{lemma}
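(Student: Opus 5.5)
First I extract a subsequential limit. By Lemma~\ref{lem:compactness-result}, applied to the time-shifted flows $g(\cdot+\hat t_k)$ based at $q_k$, which are still $\kappa$-solutions with asymptotic shrinker $\sph^{n-1}\times\R$, a subsequence of the rescaled flows $g_k(t)$ converges in the smooth Cheeger--Gromov sense to a $\kappa$-solution $(N^n,g_\infty(t))_{t\le 0}$. This limit is either the shrinking sphere or has asymptotic shrinker $\sph^{n-1}\times\R$. Each $g_k(0)$ lives on the noncompact manifold $M$, so the limit is noncompact and the sphere is excluded. If $N$ fails to have positive curvature operator, Lemma~\ref{lem:strong-max-principle-line-or-positive-curvature} gives $N=\sph^{n-1}\times\R$ and we are done. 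It remains to treat the case where $N$ has positive curvature operator, and to show that then $N=\Bry^n$.

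Assume $N$ has positive curvature operator. Then $N$ is not a neck-only manifold: Lemma~\ref{lem:can-neighborhood} applies to $N$, so $N$ has a cap region. Being a cap at time $0$ of the rescaled flow, $\Omega_\varepsilon$ contains a point at finite $g_\infty(0)$-distance from $x_\infty$. Pulling back, $q_k$ lies within bounded rescaled distance $D$ of some point of $\Omega_\varepsilon(\hat t_k)$ of $M$, so $d_{g(\hat t_k)}(q_k,\Omega_\varepsilon(\hat t_k))^2R(q_k,\hat t_k)\le D^2$ for large $k$.

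Next I compare with $\hat p_k$. By Lemma~\ref{lem:type-II}, $\hat p_k\in\Omega_\varepsilon(\hat t_k)$. By \eqref{eqn:can-neighborhood-cap-diameter-estimate-l}, $\operatorname{diam}(\Omega_\varepsilon(\hat t_k))^2R(\hat p_k,\hat t_k)\le C$. Harnack-type comparison on the bounded-geometry cap shows that $R(q_k,\hat t_k)$ and $R(\hat p_k,\hat t_k)$ are comparable. More precisely, Perelman's curvature-distance estimate for $\kappa$-solutions (or simply the smooth convergence in \eqref{eqn:convergence-to-Bryn} on balls of bounded radius) bounds $R(q_k,\hat t_k)/R(\hat p_k,\hat t_k)$ above and below. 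Hence $q_k$ stays at bounded distance from $\hat p_k$ in the metric $R(\hat p_k,\hat t_k)g(\hat t_k)$, and the rescaling factors are comparable. Passing to a further subsequence, we may assume $q_k\to y$ in the Bryant limit of Lemma~\ref{lem:type-II} and $R(q_k,\hat t_k)/R(\hat p_k,\hat t_k)\to\lambda>0$. The limit in \eqref{eqn:limits-at-hat-t-k} is then the Bryant soliton, rescaled by $\lambda$ and based at $y$, which is again $\Bry^n$ by uniqueness of limits up to isometry.

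The delicate step is the second one: ensuring that a positively curved limit forces $q_k$ to be close, relative to the scale $R(q_k,\hat t_k)^{-1/2}$, to the cap region $\Omega_\varepsilon(\hat t_k)$. This needs the nonemptiness and bounded diameter of the limiting cap in Lemma~\ref{lem:can-neighborhood}, together with the stability of the $\varepsilon$-neck property under Cheeger--Gromov convergence. Specifically, a point of $N$ that is not a $2\varepsilon$-neck pulls back to points that are not $\varepsilon$-necks for large $k$. There is also a subtlety about uniqueness of the cap. A noncompact positively curved solution has only one end, so by Lemma~\ref{lem:can-neighborhood} its cap region $\Omega_\varepsilon(\hat t_k)$ is a single set of controlled diameter containing both the pulled-back point and $\hat p_k$. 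This is exactly what \eqref{eqn:can-neighborhood-cap-diameter-estimate-l} provides, with the curvature comparison along it coming from the uniform bounded-geometry statement there.
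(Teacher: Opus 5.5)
Your proof is correct, but it splits the cases differently from the paper.

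\textbf{The paper's route.} The paper splits according to the rescaled distance $R(\hat p_k,\hat t_k)\,d_{g(\hat t_k)}(\hat p_k,q_k)^2$. If this stays bounded, $q_k$ converges to a point of the Bryant limit of Lemma~\ref{lem:type-II}; this is exactly your final step. If it diverges, the cap-diameter estimate \eqref{eqn:can-neighborhood-cap-diameter-estimate-l}, applied together with $\hat p_k\in\Omega_\delta(\hat t_k)$, forces $q_k$ to be a $\delta$-neck center for every $\delta$ and all large $k$. The limit is then the cylinder.

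\textbf{Your route.} You split according to the curvature of the limit $N$. The degenerate case is dispatched at once by Lemma~\ref{lem:strong-max-principle-line-or-positive-curvature}. In the positively curved case you apply Lemma~\ref{lem:can-neighborhood} to $N$ itself (its nonemptiness part). You then pull a non-neck point back to $M$, which lands you in the paper's bounded-distance case. In effect, your positive-curvature case is the contrapositive of the paper's second case.

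\textbf{Comparison.} The paper's argument stays entirely on $M$ and needs nothing about $N$ beyond the neck limit. Yours replaces that neck-limiting argument with the rigidity lemma. In exchange, it needs three things the paper does not: the cap structure of the limit, closedness of the neck condition under Cheeger--Gromov convergence, and a curvature comparison. The comparison is needed because your distance to the cap is controlled in the scale $R(q_k,\hat t_k)$, while the cap diameter is controlled in the scale $R(\hat p_k,\hat t_k)$.

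\textbf{A fix in the comparison step.} As written, that step reads circularly. The Bryant convergence at $\hat p_k$ only controls $R$ at points already known to lie at bounded $R(\hat p_k,\hat t_k)$-distance from $\hat p_k$, which is what you are trying to prove for $q_k$. Route it through the pulled-back non-neck points $y_k$, which approximate a point $y$ of $(N,g_\infty(0))$ that is not a $2\varepsilon$-neck.
\begin{itemize}
\item $R(y_k,\hat t_k)/R(q_k,\hat t_k)\to R_{g_\infty(0)}(y)>0$, by the convergence based at $q_k$.
\item $R(y_k,\hat t_k)/R(\hat p_k,\hat t_k)$ is pinched above and below by the Bryant convergence, since $R(\hat p_k,\hat t_k)\,d_{g(\hat t_k)}(\hat p_k,y_k)^2\le C$.
\end{itemize}
This makes the appeal to Perelman's curvature--distance estimate unnecessary. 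Your one-end remark is also unnecessary: \eqref{eqn:can-neighborhood-cap-diameter-estimate-l} already bounds the diameter of all of $\Omega_\varepsilon(\hat t_k)$.
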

\begin{proof}
By Lemma~\ref{lem:compactness-result}, after passing to a subsequence,
\[
(M^n,g_k(t),q_k)_{t\le 0}\to (N^n,g_\infty(t),x_\infty)_{t\le 0},
\]
in the smooth pointed Cheeger--Gromov sense, where $(N^n,g_\infty(t))$ is a $\kappa$-solution. Recall $\varepsilon_0$ and the points $\hat p_k$ from Lemma~\ref{lem:type-II}.

\noindent \textbf{Case 1. }Suppose that after passing to a subsequence, $\sup_k R(\hat p_k,\hat t_k)\,d_{g(\hat t_k)}(\hat p_k,q_k)^2<\infty.$ Set $\tilde g_k(t):=R(\hat p_k,\hat t_k)\,g\!\left(\hat t_k+\frac{t}{R(\hat p_k,\hat t_k)}\right)$. By \eqref{eqn:convergence-to-Bryn},
\[
(M^n,\tilde g_k(t),\hat p_k)_{t\in(-\infty,\infty)}
\to
(\Bry^n,g_{\Bry}(t),\hat x)_{t\in(-\infty,\infty)}.
\]
Since the points $q_k$ stay in a bounded $\tilde g_k(0)$-distance from $\hat p_k$, after passing to a subsequence, there exists $w\in \Bry^n$ such that
\[
(M^n,\tilde g_k(0),q_k)
\to
(\Bry^n,g_{\Bry}(0),w)
\]
in the smooth Cheeger--Gromov sense. In particular,
\[
\alpha_k:=\frac{R(q_k,\hat t_k)}{R(\hat p_k,\hat t_k)}
=R_{\tilde g_k(0)}(q_k)\to R_{g_{\Bry}(0)}(w)>0.
\]
Since $g_k(0)=\alpha_k\,\tilde g_k(0),$ it follows that
\[
(M^n, g_k(0),q_k)
\to
(\Bry^n,\alpha g_{\Bry}(0),w),
\]
where $\alpha=R_{g_{\Bry}(0)}(w)>0$. Therefore, $(N^n,g_\infty(0),x_\infty)$ is isometric to $(\Bry^n,\alpha g_{\Bry}(0),w)$. By backward uniqueness \cite{Kot10}, the entire flow $(N^n,g_\infty(t))_{t\in(-\infty,0]}$ is the canonical Ricci flow on the Bryant soliton. 

\noindent \textbf{Case 2. }Suppose that $R(\hat p_k,\hat t_k)\,d_{g(\hat t_k)}(\hat p_k,q_k)^2\to \infty.$ We claim that for every $\delta\in(0,\varepsilon_0)$, the point $(q_k,\hat t_k)$ is the center of a $\delta$-neck for all sufficiently large $k$. Suppose not. Then, after passing to a subsequence, there exists $\delta_1\in(0,\varepsilon_0)$ such that $q_k\in \Omega_{\delta_1}(\hat t_k)$ for all $k$. Since also $\hat p_k\in \Omega_{\delta_1}(\hat t_k)$ for all large $k$, the estimate in Lemma~\ref{lem:can-neighborhood} gives
\[
R(\hat p_k,\hat t_k)\,
d_{g(\hat t_k)}(\hat p_k,q_k)^2
\le
R(\hat p_k,\hat t_k)\,
\operatorname{diam}_{g(\hat t_k)}(\Omega_{\delta_1}(\hat t_k))^2
\le C_{\delta_1},
\]
a contradiction.

Therefore, for each $\delta\in(0,\varepsilon_0)$, after passing to a subsequence, $(q_k,\hat t_k)$ is the center of a $\delta$-neck for all sufficiently large $k$. Hence $(M^n,g_k(0),q_k)$ converges to $ \sph^{n-1}\times\R$ in the smooth Cheeger--Gromov sense. Therefore $(N^n,g_\infty(0),x_\infty)$ is isometric to the round cylinder. By backward uniqueness \cite{Kot10}, the entire flow $(N^n,g_\infty(t))_{t\in(-\infty,0]}$ is the shrinking round cylinder.

This proves the lemma.
\end{proof}
We now prove the main result of this section where we show that a noncompact $\kappa$-solution with asymptotic shrinker $\sph^{n-1}\times \R$ is uniformly PIC. This proves Theorem~\ref{thm:main-result} in the noncompact case. 
\begin{theorem}
\label{thm:noncompact-kappa-has-PIC}
Let $(M^n,g(t))_{t\leq 0},n\geq 4$ be a noncompact $\kappa$-solution with asymptotic shrinker $\sph^{n-1}\times \R$,  with positive curvature operator. Then, there exists $\Lambda>0,b<0$ such that $(M^n,g(t))_{t\leq b}$ is uniformly $\Lambda$-PIC. 
\end{theorem}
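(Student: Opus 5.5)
We combine Lemma~\ref{lem:limits-at-hat-t-k} with the propagation result Proposition~\ref{prop:continuity-theta}. First we show that, for all large $k$, the time slice $g(\hat t_k)$ is uniformly $\theta_n/2$-PIC. Suppose not. Then, after passing to a subsequence, there are points $q_k\in M$ at which the $\theta_n/2$-PIC pinching inequality fails for $g(\hat t_k)$. Rescale around $(q_k,\hat t_k)$ as in \eqref{eqn:limits-at-hat-t-k}. By Lemma~\ref{lem:limits-at-hat-t-k}, a subsequence converges to either $\Bry^n$ or $\sph^{n-1}\times\R$. Both are uniformly $\theta_n$-PIC by Lemma~\ref{lem:uniform-theta}. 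By scale invariance and the stability property (iii), $q_k$ satisfies the $\theta_n/2$-PIC pinching inequality for $g(\hat t_k)$ once $k$ is large, which is a contradiction.

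Next we propagate the condition forward in time. Rescale so that time $\hat t_k$ has curvature normalized: for instance, set $Q_k:=\sup_M R(\cdot,\hat t_k)$, which is finite because a $\kappa$-solution has bounded curvature. Consider the flow $\bar g_k(s):=Q_k\,g(\hat t_k+s/Q_k)$. By Hamilton's Harnack inequality, $R$ is nondecreasing in time along an ancient solution with bounded nonnegative curvature operator, so this normalization only controls $R$ at $s=0$. We therefore need a bound $R\le C_0$ on a forward interval. We obtain it from Perelman's curvature bound for $\kappa$-solutions: $\partial_t R\le C R^2$. Integrating this gives $R\le 2$ on $[0,c]$ for a universal $c>0$, with the constant coming from the compactness of Lemma~\ref{lem:compactness-result}. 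Then Proposition~\ref{prop:continuity-theta}, applied with $\theta=\theta_n/2$ and $C_0=2$, shows that $\bar g_k(s)$ is uniformly $\theta_n c_n/2$-PIC on $[0,\delta]$. Here $\delta$ depends only on $n$ and $\kappa$.

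The main obstacle is that this step covers only a short window $[\hat t_k,\hat t_k+\delta/Q_k]$, while we need every $t\le b$. We do not expect those windows to cover $(-\infty,b]$. So we would instead apply the contradiction argument of the first step to an arbitrary sequence of times $t_k\to-\infty$, rather than only to the special times $\hat t_k$. This requires extending Lemma~\ref{lem:limits-at-hat-t-k} to arbitrary times.

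Suppose, for contradiction, that $(q_k,t_k)$ with $t_k\to-\infty$ violates the $\theta_n/2$-PIC pinching inequality. Rescale at $(q_k,t_k)$. By Lemma~\ref{lem:compactness-result}, a subsequence converges to a $\kappa$-solution $N$ with shrinker $\sph^{n-1}\times\R$; case (i) is excluded since the limit is noncompact. If $(q_k,t_k)$ lies on necks of ever-smaller $\varepsilon$, the limit is the cylinder. Otherwise $q_k\in\Om_\varepsilon(t_k)$, and by \eqref{eqn:can-neighborhood-cap-diameter-estimate-l} the limit is a noncylindrical, positively curved, noncompact solution. The key step is to show that this limit is the Bryant soliton. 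We would try to derive this from the Type II blow-up as $t_k\to-\infty$, arguing as in Lemma~\ref{lem:type-II} via \cite{CMZ25a} that the rescaled cap region converges to a steady soliton, by the argument of \cite[Proposition 9.7]{Bre20}. Either outcome contradicts Lemma~\ref{lem:uniform-theta} together with property (iii). This yields some $b<0$ such that $(M^n,g(t))_{t\le b}$ is uniformly $\Lambda$-PIC with $\Lambda=\theta_n/2$.

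If the full eternal-limit argument fails, the fallback is as follows. Use the time-$\hat t_k$ pinching from the first step, together with the pinching on $[\hat t_k,\hat t_k+\delta/Q_k]$ from the second step, on a sequence of times chosen with $\hat t_{k+1}+\delta/Q_{k+1}\ge\hat t_k$. This would require choosing the $\hat t_k$ densely enough, which Lemma~\ref{lem:type-II} leaves open.
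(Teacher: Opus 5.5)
Your first step (pinching at the slice $\hat t_k$ via Lemma~\ref{lem:limits-at-hat-t-k}) and the short-time propagation are fine. They correspond to the paper's Claim~\ref{claim:claim-1-noncompact-kappa-has-PIC} and its use of Proposition~\ref{prop:continuity-theta}. You also do not need Perelman's derivative bound, since by the Harnack inequality $R\le C_0:=\sup_M R(\cdot,0)$ on all of $M\times(-\infty,0]$.

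The gap is the step you yourself call key, and it is circular. Take a blow-up limit at a cap point $(q_k,t_k)$ along an \emph{arbitrary} sequence $t_k\to-\infty$. It is a noncompact $\kappa$-solution with positive curvature operator and asymptotic shrinker $\sph^{n-1}\times\R$, which is exactly the class the theorem is about. So identifying it as $\Bry^n$ cannot be done with the tools at hand:
\begin{itemize}
\item \cite{BN23} is unavailable, because you have no uniform PIC on the limit.
\item Hamilton's steady-soliton theorem together with \cite[Theorem~5.2]{CMZ25a} is unavailable, because nothing makes the limit eternal with $\sup R$ attained. The point-picking of \cite[Proposition~9.7]{Bre20} gives such a limit only at the specially selected points $(\hat p_k,\hat t_k)$. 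It gives no forward-in-time curvature control at arbitrary $(q_k,t_k)$.
\end{itemize}
For the same reason Lemma~\ref{lem:limits-at-hat-t-k} does not extend to arbitrary times, since its Case~1 rests on the Bryant convergence at $(\hat p_k,\hat t_k)$. Your fallback fails as you suspected: the propagation windows have bounded length, and nothing controls the gaps between the $\hat t_k$.

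The missing idea is a first-failure-time argument that \emph{manufactures} on the limit the PIC hypothesis needed for \cite{BN23}.
\begin{enumerate}
\item Fix $\alpha<\theta_nc_n/100$ and set $t_k:=\inf\{t\in[\hat t_k,0]: g(t)\text{ is not uniformly }\alpha\text{-PIC}\}$. If $\inf_k t_k>-\infty$ you are done, so assume $t_k\to-\infty$.
\item Run your first step on the windows $[\hat t_k-LR(\hat p_k,\hat t_k)^{-1},\hat t_k+LR(\hat p_k,\hat t_k)^{-1}]$. This gives $t_k>\hat t_k+LR(\hat p_k,\hat t_k)^{-1}$ for each fixed $L$ and large $k$.
\item Show $\hat p_k\in\Om_\varepsilon(t_k)$ using neck stability \cite[Theorem~3.11]{LZ22} and $R(\hat p_k,\hat t_k)\le R(\hat p_k,t_k)$. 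Otherwise $\hat p_k$ would be a neck at time $\hat t_k-TR(\hat p_k,\hat t_k)^{-1}$, where the flow looks like the Bryant tip.
\item The same Harnack inequality shows that for each fixed $s<0$, the time $t_k+sR(\hat p_k,t_k)^{-1}$ lies in $[\hat t_k,t_k)$. Hence the limit at $(\hat p_k,t_k)$ is uniformly $\alpha$-PIC for all negative times.
\item Now \cite{BN23} applies. The cylinder and its quotients are excluded because the base point is not a neck, so the limit is $\Bry^n$ and is therefore $\theta_n$-PIC.
\item With the cap-diameter bound this gives $\theta_n/2$-PIC on all of $g(t_k)$. Proposition~\ref{prop:continuity-theta} then gives $\alpha$-PIC on $[t_k,t_k+\delta]$, contradicting the definition of $t_k$.
\end{enumerate}
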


\begin{proof}
Let $C_0>0$ be such that $\sup_{(x,t)\in M\times (-\infty,0]}R(x,t)\leq C_0$. Recall the choice of $\varepsilon>0$ and the points $(\hat{p}_k,\hat{t}_k)$ from Lemma~\ref{lem:type-II}. Let $\theta_n$ be the constant from Lemma~\ref{lem:uniform-theta}. Let $c_n\in (0,1)$ be the constant from Proposition~\ref{prop:continuity-theta}. 

By taking a smaller $\varepsilon>0$, we may ensure that the $\varepsilon$-neck definition gives $C^m$-closeness (after scaling) to the round cylinder with $m$ large, so that every point which is the center of an $\varepsilon$-neck satisfies $\theta_n/2$-PIC pinching inequality. We further choose $\varepsilon$ small enough such that the neck-stability theorem in \cite[Theorem 3.11]{LZ22} applies for this choice of $\varepsilon$. 

Let $t\leq 0$. Since every point in $(M^n\setminus\Om_\varepsilon(t),g(t))$ is an $\varepsilon$-neck, it follows that every point in $(M^n\setminus\Om_\varepsilon(t),g(t))$ satisfies the $\theta_n/2$-PIC pinching inequality. 
\begin{claim}
\label{claim:claim-1-noncompact-kappa-has-PIC}
For each $L>0$, the flow $(M^n,g(t))_{t\in [\hat{t}_k-LR(\hat{p}_k,\hat{t}_k)^{-1},\hat{t}_k+LR(\hat{p}_k,\hat{t}_k)^{-1}]}$ is uniformly $\theta_n/2$-PIC for all large $k$ (depending on $L$). 
\end{claim}
\begin{proof}
Under the convergence \eqref{eqn:convergence-to-Bryn},
\[
\left(M^n,\;R(\hat p_k,\hat t_k)\,g\!\left(\hat t_k+\frac{s}{R(\hat p_k,\hat t_k)}\right),\;\hat p_k\right)_{s\in[-L,L]}
\to
(\Bry^n,g_\infty(s),\hat x)_{s\in[-L,L]}
\]
smoothly in the pointed Cheeger--Gromov sense. Since the Bryant tip is not the center of an $\varepsilon$-neck, it follows that for all large $k$ the point $\hat p_k$ is not the center of an $\varepsilon$-neck at time $\hat t_k+sR(\hat p_k,\hat t_k)^{-1}$ for every $|s|\le L$, i.e.
\[
\hat p_k\in \Omega_\varepsilon\!\left(\hat t_k+sR(\hat p_k,\hat t_k)^{-1}\right)
\qquad\text{for all }|s|\le L.
\]
Moreover, by smooth convergence on $[-L,L]$, there exists $A=A(L)\ge 1$ such that for all large $k$ and all $|s|\le L$,
\[
A^{-1}R(\hat p_k,\hat t_k)
\le
R\!\left(\hat p_k,\hat t_k+sR(\hat p_k,\hat t_k)^{-1}\right)
\le
A\,R(\hat p_k,\hat t_k).
\]
Combining this with the cap-diameter \eqref{eqn:can-neighborhood-cap-diameter-estimate-l} estimate yields
\[
R(\hat p_k,\hat t_k)\,
\operatorname{diam}_{g(\hat t_k+sR(\hat p_k,\hat t_k)^{-1})}
\!\left(\Omega_\varepsilon\!\left(\hat t_k+sR(\hat p_k,\hat t_k)^{-1}\right)\right)^2
\le A\,C_\varepsilon
\]
for all $|s|\le L$ and all large $k$. It follows that $\Omega_\varepsilon\!\left(\hat t_k+sR(\hat p_k,\hat t_k)^{-1}\right)$ is contained in a ball of radius at most $(A\,C_\varepsilon)^{1/2}$ about $\hat p_k$ with respect to the rescaled metric $R(\hat p_k,\hat t_k)\,g\!\left(\hat t_k+sR(\hat p_k,\hat t_k)^{-1}\right).$ Therefore, for each fixed $L>0$, the regions
\[
\Omega_\varepsilon\!\left(\hat t_k+sR(\hat p_k,\hat t_k)^{-1}\right),\qquad |s|\le L,
\]
lie in a uniformly bounded part of the rescaled flow, and hence converge smoothly to the corresponding compact regions of the Bryant soliton. Since the Bryant soliton is uniformly $\theta_n$-PIC, we conclude that every point of $\Omega_\varepsilon\!\left(\hat t_k+sR(\hat p_k,\hat t_k)^{-1}\right)$ satisfies the $\theta_n/2$-PIC pinching inequality for all $|s|\le L$ and all sufficiently large $k$. Together with the fact that every point of the complement is the center of an $\varepsilon$-neck and hence also satisfies the $\theta_n/2$-PIC pinching inequality, this proves the claim.
\end{proof}
Suppose, for contradiction, that the conclusion of the theorem fails. Then for every $\beta>0$ and $b_0<0$, the flow $(M^n,g(t))_{t\leq b_0}$ is not uniformly $\beta$-PIC. Pick $\alpha>0$ such that $\alpha<\theta_n c_n/100$. 

Define for all large $k$, 
$$t_k:=\inf\{t\in [\hat{t}_k,0]:(M,g(t))\text{ is not uniformly }\alpha\text{-PIC}\}.$$
Applying the contradiction assumption with $\beta:=\alpha$ and $b_0:=-1$, we can fix $t_* \leq-1$ such that $(M, g\left(t_*\right))$ is not uniformly $\alpha$-PIC. Since $\hat{t}_k \rightarrow-\infty$, for all large $k$ we have $\hat{t}_k \leq t_*$, hence $t_* \in \left[\hat{t}_k, 0\right]$. Therefore the defining set is nonempty and $t_k$ is well-defined for all large $k$. 

As $(M,g(\hat{t}_k))$ is $\alpha$-PIC, it follows that $t_k\in (\hat{t}_k,0]$ and that the flow $(M,g(t))_{t\in [\hat{t}_k,t_k)}$ is uniformly $\alpha$-PIC. By Claim~\ref{claim:claim-1-noncompact-kappa-has-PIC} and the choice $\alpha<\theta_n/2$, we obtain that for each $L>0$, $t_k>\hat{t}_k+LR(\hat{p}_k,\hat{t}_k)^{-1}$ for all large $k$ (depending on $L$). 

If $t_{\mathrm{inf}}:=\inf_k t_k>-\infty$, then $(M,g(t))_{t<t_{\mathrm{inf}}}$ is uniformly $\alpha$-PIC which contradicts our assumption. Hence after passing to a subsequence, we may ensure $t_k\to -\infty$. 

\begin{claim}
\label{claim:claim-2-noncompact-kappa-has-PIC}
 $\hat{p}_k\in \Om_\varepsilon(t_k)$ for all large $k$. 
\end{claim}
\begin{proof}
Suppose not, i.e. after passing to a subsequence, $(\hat{p}_k,t_k)$ is the center of an $\varepsilon$-neck. Let $0<\delta\leq \varepsilon$.  Consider the parabolically rescaled flow 
\[
\tilde g_k(t):=R(\hat p_k,t_k)\,g\!\left(t_k+\frac{t}{R(\hat p_k,t_k)}\right),
\]
so that $R_{\tilde g_k(0)}(\hat p_k)=1$ and $\hat p_k$ is the center of an $\varepsilon$-neck in $(M,\tilde{g}_k(0))$. By the neck stability theorem \cite[Theorem 3.11]{LZ22}, there exists $T=T(\delta,\varepsilon,\kappa)>0$ such that $\hat p_k$ is the center of a $\delta$-neck in $(M,\tilde g_k(t))$ for all $t\le -T$, that is, $\hat p_k$ is the center of a $\delta$-neck in $(M,g(s))_{s\leq t_k-TR(\hat{p}_k,t_k)^{-1}}$. By Hamilton's trace Harnack inequality $R(\hat{p}_k,\hat{t}_k)\leq R(\hat{p}_k,t_k)$ for all $k$. This implies 
$$\hat{t}_k-TR(\hat{p}_k,\hat{t}_k)^{-1}\leq t_k-TR(\hat{p}_k,t_k)^{-1}.$$
Since $\hat p_k$ is a $\delta$-neck center for all times $\le t_k-TR(\hat p_k,t_k)^{-1}$, it follows that $(\hat p_k,\hat t_k-TR(\hat p_k,\hat{t}_k)^{-1})$ is also the center of a $\delta$-neck in $M$. This is impossible since for large $k$, the pointed manifold $(M^n,R(\hat{p}_k,\hat{t}_k)g(-TR(\hat{p}_k,\hat{t}_k)^{-1}+\hat{t}_k),\hat{p}_k)$ is arbitrarily close to $(\operatorname{Bry}^n,g_\infty(-T),\hat{x})$ where $\hat{x}$ is the tip of the Bryant soliton. This shows that  $\hat{p}_k\in \Om_\varepsilon(t_k)$ for all large $k$. 
\end{proof}
By Lemma~\ref{lem:compactness-result}, after passing to a subsequence, the rescaled flows
\[
\left(M^n,\,R(\hat p_k,t_k)\,g\!\left(t_k+\frac{t}{R(\hat p_k,t_k)}\right),\,\hat p_k\right)_{t\le0}
\]
converge in the smooth Cheeger--Gromov sense to a $\kappa$-solution $(N^n,h(t),x_\infty)_{t\le0}$ with $R_{h(0)}(x_\infty)=1$. Since $N^n$ is a pointed Cheeger--Gromov limit of the complete, connected, noncompact manifolds $M^n$, it is also noncompact. From Lemma~\ref{lem:compactness-result}, the asymptotic shrinker of $N$ is the round cylinder $\sph^{n-1}\times \R$.
\begin{claim}
\label{claim:claim-3-noncompact-kappa-has-PIC}
 $(N^n,h(t))_{t\leq 0}$ is the canonical Ricci flow on Bryant soliton, which is uniformly $\theta_n$-PIC.
\end{claim}
\begin{proof}
Fix $s<0$, and choose $L>2|s|$. For all sufficiently large $k$, we have $t_k>\hat t_k+L\,R(\hat p_k,\hat t_k)^{-1}.$
Using Hamilton's trace Harnack inequality $R(\hat p_k,\hat t_k)^{-1}\ge R(\hat p_k,t_k)^{-1}.$ Therefore, for all sufficiently large $k$,
\[
t_k+s\,R(\hat p_k,t_k)^{-1}
>
\hat t_k+(L+s)\,R(\hat p_k,\hat t_k)^{-1}
\ge \hat t_k,
\]
since $L+s>|s|>0$. It follows that $t_k+s\,R(\hat p_k,t_k)^{-1}\in [\hat t_k,t_k)$ for all sufficiently large $k$. By the definition of $t_k$, the slice $(M,g(t_k+s\,R(\hat p_k,t_k)^{-1}))$ is uniformly $\alpha$-PIC for all sufficiently large $k$. Passing to the pointed limit gives that $(N^n,h(s))$ is uniformly $\alpha$-PIC. Since $s<0$ was arbitrary, the ancient flow $(N^n,h(t))_{t<0}$ is uniformly $\alpha$-PIC. Note that $(N^n,h(t))$ has bounded curvature and is $\kappa$-noncollapsed. Moreover, since it has nonnegative curvature operator, it is weakly PIC2. Hence $(N^n,h(t))$ is an ancient $\kappa$-solution in the sense of \cite{BN23}. By \cite{BN23}, the limit $(N^n,h(t))$ is either a shrinking cylinder (or a quotient) or the Bryant soliton. 

Since $x_\infty$ is the smooth limit of points $\hat p_k\in \Omega_\varepsilon(t_k)$, the point $x_\infty$ is not the center of an $\varepsilon/2$-neck in $(N,h(0))$. This rules out the shrinking-cylinder case, including all quotients, since in every such flow every point is the center of a neck. Hence $(N^n,h(t))$ is the Bryant soliton. In particular, by Lemma~\ref{lem:uniform-theta}, $(N^n,h(t))$ is uniformly $\theta_n$-PIC. This proves Claim~\ref{claim:claim-3-noncompact-kappa-has-PIC}.
\end{proof}
By the cap-diameter estimate \eqref{eqn:can-neighborhood-cap-diameter-estimate-l} and Claim~\ref{claim:claim-2-noncompact-kappa-has-PIC}, the entire region $\Omega_\varepsilon(t_k)$ is contained in a ball of some finite radius $r$ about $\hat p_k$, independent of $k$, with respect to the rescaled metric $R(\hat p_k,t_k)g(t_k)$. On the other hand, by Claim~\ref{claim:claim-3-noncompact-kappa-has-PIC}, after passing to a subsequence the rescaled pointed manifolds $\left(M^n,R(\hat p_k,t_k)g(t_k),\hat p_k\right)$ converge smoothly to the time-zero slice of the Bryant soliton. Hence, for all large $k$, every point of $\Omega_\varepsilon(t_k)$ satisfies $\theta_n/2$-PIC pinching inequality with respect to $g(t_k)$.

Since every point of $M\setminus\Omega_\varepsilon(t_k)$ also satisfies $\theta_n/2$-PIC pinching inequality, it follows that $(M^n,g(t_k))$ is uniformly $\theta_n/2$-PIC. By Proposition~\ref{prop:continuity-theta}, there exists $\delta>0$, depending only on $n,\theta_n,C_0$, such that $(M^n,g(t))_{t\in [t_k,t_k+\delta]}$ is uniformly $\theta_n c_n/2$-PIC. Since $\alpha<\theta_n c_n/2$, we conclude that $(M^n,g(t))_{t\in [\hat{t}_k,t_k+\delta]}$ is uniformly $\alpha$-PIC, contradicting the definition of $t_k$. This contradiction completes the proof.
\end{proof}

\begin{corollary}
\label{cor:classification-noncompact-kappa}
Let $(M^n,g(t))_{t\leq 0},n\geq 4$ be a noncompact $\kappa$-solution with asymptotic shrinker $\sph^{n-1}\times \R$. Then, $(M^n,g(t))_{t\leq 0}$ is isometric to a family of shrinking cylinders $\sph^{n-1}\times \R$, or the canonical Ricci flow on the Bryant soliton.
\end{corollary}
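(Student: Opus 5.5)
By Lemma~\ref{lem:strong-max-principle-line-or-positive-curvature}, either $(M^n,g(t))_{t\le0}$ is the round shrinking cylinder $\sph^{n-1}\times\R$, and we are done, or it has positive curvature operator. In the latter case $M$ is noncompact by assumption. Theorem~\ref{thm:noncompact-kappa-has-PIC} then provides $\Lambda>0$ and $b<0$ such that $(M^n,g(t))_{t\le b}$ is uniformly $\Lambda$-PIC. I will reduce to the classification of \cite{BN23,LZ22} for the time-shifted flow $g'(t):=g(t+b)$, $t\le0$.

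The steps are as follows. First, check that $(M,g'(t))_{t\le0}$ is an ancient $\kappa$-solution in the sense of \cite{BN23}. It is complete, nonflat, $\kappa$-noncollapsed and has bounded curvature, since these properties are inherited from $g$. It has nonnegative curvature operator, which implies weakly PIC2, and it is uniformly PIC by Theorem~\ref{thm:noncompact-kappa-has-PIC}. Second, apply \cite{BN23} (or \cite{LZ22}) to conclude that $g'$ is, up to scaling, a family of shrinking cylinders or a quotient thereof, or the Bryant soliton. Positive curvature operator excludes the cylinders and their quotients, so $(M,g(t))_{t\le b}$ is the Bryant soliton flow.

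Third, extend the identification to the whole interval $t\in[b,0]$. Since $g(b)$ is isometric to a Bryant soliton metric, consider the canonical Bryant flow starting from that metric. It has bounded curvature and is defined for all times. By uniqueness of complete bounded-curvature Ricci flows (Chen--Zhu), it agrees with $g(t)$ on $[b,0]$. Hence the whole flow $(M^n,g(t))_{t\le0}$ is the canonical Ricci flow on the Bryant soliton.

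The only delicate point is making sure the hypotheses of \cite{BN23} match our definitions, namely the noncollapsing convention and the requirement of uniform PIC on the entire time interval. The time shift handles the latter. The former is standard, because Perelman's $\kappa$-noncollapsing at all scales is the notion used there.
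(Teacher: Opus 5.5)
Your proposal is correct and follows the paper's proof. You use Lemma~\ref{lem:strong-max-principle-line-or-positive-curvature} to reduce to positive curvature operator, then Theorem~\ref{thm:noncompact-kappa-has-PIC} for uniform PIC on $t\le b$, and then \cite{BN23}. Your explicit time shift and the forward-uniqueness argument, which extends the Bryant identification from $(-\infty,b]$ to $[b,0]$, fill in a step that the paper leaves implicit.
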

\begin{proof}
If the flow is the round cylinder, there is nothing to prove. Otherwise, by Lemma~\ref{lem:strong-max-principle-line-or-positive-curvature}, we may assume that $(M,g(t))_{t\leq 0}$ has positive curvature operator. From Theorem~\ref{thm:noncompact-kappa-has-PIC}, there exists $\Lambda>0$ and $b<0$ such that $(M,g(t))_{t\leq b}$ is uniformly $\Lambda$-PIC. In this case, the solution is automatically weakly PIC2. By \cite{BN23}, we conclude that $(M,g(t))_{t\leq 0}$ is isometric to the canonical Ricci flow on the Bryant soliton.
\end{proof}

\section{Compact \texorpdfstring{$\kappa$}{kappa}-solutions}
\label{sec:compact-case}
In this section we treat compact $\kappa$-solutions with asymptotic shrinker $\sph^{n-1}\times\R$. 
\begin{lemma}
\label{lem:large-diameter-times-curvature}
Let $(M^n,g(t))_{t\leq 0}$, $n\geq 4$, be a compact $\kappa$-solution with asymptotic shrinker $\sph^{n-1}\times \R$. Let $s_k\to -\infty$ be any sequence. Then, after passing to a subsequence, there exist points $q_k\in M$ such that $R(q_k,s_k)\,\operatorname{diam}_{g(s_k)}(M)^2\to \infty.$
\end{lemma}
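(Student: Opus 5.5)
We argue by contradiction, using the compactness result Lemma~\ref{lem:compactness-result} together with the assumption on the asymptotic shrinker. Suppose the conclusion fails. Then, along a subsequence, there is a constant $D<\infty$ such that
\[
R(q,s_k)\,\operatorname{diam}_{g(s_k)}(M)^2\le D
\]
for every $q\in M$ and every $k$. Choose $q_k$ to be a point where $R(\cdot,s_k)$ attains its maximum $Q_k$; such points exist since $M$ is compact. Rescale as in Lemma~\ref{lem:compactness-result}, setting
\[
\bar g_k(t):=Q_k\,g(s_k+tQ_k^{-1}),\qquad t\le 0 .
\]
This is again a $\kappa$-solution with $\sph^{n-1}\times\R$ as an asymptotic shrinker, since time translation and parabolic rescaling preserve this property. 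By Lemma~\ref{lem:compactness-result}, a subsequence converges in the smooth pointed Cheeger--Gromov sense to a $\kappa$-solution $(M_\infty,g_\infty(t),p_\infty)$ with $R_{g_\infty(0)}(p_\infty)=1$. The contradiction hypothesis gives $\operatorname{diam}_{\bar g_k(0)}(M)^2\le D$. Hence $M_\infty$ is compact with diameter at most $\sqrt D$, and it is diffeomorphic to $M$ for large $k$.

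Lemma~\ref{lem:compactness-result} leaves two possibilities for the limit.

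\textbf{Case (i): the limit is the round shrinking sphere $\sph^n$.} Then for large $k$ the time-$s_k$ slices of the original flow are, after rescaling, arbitrarily close to round spheres. In dimension $n\ge4$ with nonnegative curvature operator, Huisken's pinching estimate (equivalently, the Böhm--Wilking or Brendle--Schoen pinching cones) shows that sufficiently pinched metrics stay pinched forward in time. The flow is defined on $[s_k,0]$ with $-s_k\to\infty$. So a metric close to round at time $s_k$ remains close to round under the backward-scaled picture as well: Perelman's asymptotic shrinker is obtained by rescaling at times $\tau\to\infty$, i.e.\ times before $s_k$. I would rather use this directly. Pinching of the form $|\mathring{\operatorname{Rm}}|\le\eta R$, preserved forward from $s_k$, shows the whole flow on $[s_k,0]$ is $\eta$-pinched. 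Letting $k\to\infty$, the flow on all of $(-\infty,0]$ is $\eta$-pinched for every $\eta$, hence round. A round sphere's asymptotic shrinker is $\sph^n$, not $\sph^{n-1}\times\R$, which is a contradiction. (Alternatively, closeness to round at very negative times contradicts the cylinder shrinker via the reduced-volume/entropy characterization, but the pinching argument is cleaner.)

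\textbf{Case (ii): every asymptotic shrinker of the limit is $\sph^{n-1}\times\R$.} Then $M_\infty$ is a compact $\kappa$-solution whose asymptotic shrinker is $\sph^{n-1}\times\R$. We cannot conclude immediately, since compact such solutions exist (Perelman's solution). The key extra input is that the limit is an ancient flow obtained from times $s_k\to-\infty$ with bounded rescaled diameter, while the original flow has the cylinder as its shrinker at $-\infty$. Consider the rescaled flows at times $s_k+tQ_k^{-1}$ for $t\le0$: $\bar g_k$ has diameter at most $\sqrt D$ at $t=0$. By Perelman's reduced distance estimates, the asymptotic shrinker of the original flow is visible at time scale $|s_k|$. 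Since $Q_k|s_k|$ is bounded below by compactness considerations (a $\kappa$-solution satisfies $R\ge c/|t|$ somewhere in a rough sense), we have $\operatorname{diam}_{g(s_k)}(M)^2/|s_k|\le D/(Q_k|s_k|)$. If $Q_k|s_k|\to\infty$ this contradicts the fact that, at scale $|s_k|$, the manifold contains arbitrarily long necks: convergence to $\sph^{n-1}\times\R$ in Theorem~\ref{thm:existence-of-asymptotic-shrinker} forces $\operatorname{diam}_{g(s_k)}(M)^2/|s_k|\to\infty$. If instead $Q_k|s_k|$ stays bounded, then $R(\cdot,s_k)\le C/|s_k|$ on all of $M$. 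Combined with $\operatorname{diam}^2\le D/Q_k$, this again contradicts the long-neck picture from the asymptotic shrinker, which requires $\operatorname{diam}_{g(s_k)}(M)^2/|s_k|\to\infty$ while $Q_k|s_k|$ is bounded below.

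So the core claim, and the main obstacle, is to show that a compact $\kappa$-solution with cylindrical asymptotic shrinker satisfies both $\operatorname{diam}_{g(s)}(M)^2/|s|\to\infty$ and $\sup_M R(\cdot,s)\,|s|\ge c>0$ as $s\to-\infty$. The second follows from the shrinker being nonflat with $R=\tfrac{n-1}{2}$-type normalization at the points $q_\tau$. The first follows from Cheeger--Gromov convergence to the noncompact cylinder: rescaled balls of any radius embed, so the rescaled diameter is unbounded. Since Theorem~\ref{thm:existence-of-asymptotic-shrinker} only gives convergence along a subsequence $\tau_i$, I would use the remark from \cite{LZ22} that every asymptotic shrinker is the cylinder, applied along the given sequence $s_k$. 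Multiplying the two facts gives $R(q_k,s_k)\operatorname{diam}^2\to\infty$ directly, with $q_k$ the maximum point. This actually makes the case analysis above unnecessary, and I would present this direct argument as the proof.
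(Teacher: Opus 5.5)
Your final direct argument is essentially the paper's proof: apply Theorem~\ref{thm:existence-of-asymptotic-shrinker} along $\tau_k=-s_k$ (with Li--Zhang's remark identifying the limit as the cylinder), deduce $\tau_k^{-1}\operatorname{diam}_{g(s_k)}(M)^2\to\infty$ from noncompactness of the limit and $\tau_k R(q_k,s_k)\to R_{g_\infty(-1)}(q_\infty)>0$ at the base points, and multiply — using the maximum point of $R(\cdot,s_k)$ instead of the base point is harmless. You are right to discard the contradiction and case analysis, which is unnecessary, and whose Case (ii) reasoning is too loose to stand on its own.
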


\begin{proof}
Set $\tau_k:=-s_k\to \infty$, and let $h(\tau):=g(-\tau)$ for $\tau\ge 0$. Fix a point $p_0\in M$. By Theorem~\ref{thm:existence-of-asymptotic-shrinker}, after passing to a subsequence we may choose points $q_k\in M$ such that $\ell^h_{(p_0,0)}(q_k,\tau_k)\le \frac n2$ for all $k$, and such that the rescaled pointed flows $\left(M,\ \bar g_k(t):=\tau_k^{-1}g(\tau_k t),\ (q_k,-1)\right)_{t\in[-1,-1/2]}$ converge in the smooth pointed Cheeger--Gromov sense to the shrinking round cylinder $\sph^{n-1}\times \R$ \eqref{eqn:existence-of-asymptotic-shrinker-2}. Because $\operatorname{diam}_{\bar g_k(-1)}(M)^2=\tau_k^{-1}\operatorname{diam}_{g(s_k)}(M)^2$, it follows that $\tau_k^{-1}\operatorname{diam}_{g(s_k)}(M)^2 \to \infty.$ We also obtain that $R_{\bar g_k(-1)}(q_k)=\tau_k\,R(q_k,s_k)\to R_{g_\infty(-1)}(q_\infty)>0.$ Combining these facts, 
\[
R(q_k,s_k)\operatorname{diam}_{g(s_k)}(M)^2
=
\bigl(\tau_k R(q_k,s_k)\bigr)\,
\bigl(\tau_k^{-1}\operatorname{diam}_{g(s_k)}(M)^2\bigr)\to \infty.
\]
This completes the proof.
\end{proof}
We first use the classification in the noncompact case to identify the possible limits of the rescaled flows associated with a compact $\kappa$-solution.
\begin{lemma}
\label{lem:dim-red-compact}
Let $(M^n,g(t))_{t\leq 0},n\geq 4$ be a compact $\kappa$-solution with asymptotic shrinker $\sph^{n-1}\times \R$ and positive curvature operator. Let $t_k\to -\infty$ and $x_k\in M^n$. After passing to a subsequence, 
$$\left(M^n,g_k(t):=R(x_k,t_k)g\left({t}_k+\frac{t}{R(x_k,{t}_k)}\right),x_k\right)_{t\leq 0}\to (N^n,g_\infty(t),x_\infty)_{t\leq 0},$$
where $(N^n,g_\infty(t))$ is isometric to the family of shrinking cylinders $\sph^{n-1}\times \R$ or the Bryant soliton $\operatorname{Bry}^n$, such that $R_{g_\infty(0)}(x_\infty)=1$. 
\end{lemma}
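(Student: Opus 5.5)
First apply Lemma~\ref{lem:compactness-result} to the rescaled flows $g_k(t)$, based at $(x_k,0)$ with $Q_k=R(x_k,t_k)$. After passing to a subsequence, this gives a pointed smooth Cheeger--Gromov limit $(N^n,g_\infty(t),x_\infty)_{t\le0}$. The limit is a $\kappa$-solution with $R_{g_\infty(0)}(x_\infty)=1$, and it is either the shrinking round sphere or has every asymptotic shrinker equal to $\sph^{n-1}\times\R$. The plan is to show, in order, that the limit is noncompact, then that it is eternal or a cylinder, and finally to invoke Corollary~\ref{cor:classification-noncompact-kappa}.

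\emph{Step 1: the limit is noncompact.} By Lemma~\ref{lem:large-diameter-times-curvature} we expect $R\,\operatorname{diam}^2\to\infty$ at times $t_k$. That lemma only gives this at the special points $q_k$, so we need a way to compare $R(x_k,t_k)$ with $R(q_k,t_k)$. I would instead argue directly by contradiction. Suppose $N$ is compact. Then $N$ is the round shrinking sphere. If instead $N$ were compact with asymptotic shrinker $\sph^{n-1}\times\R$, it would be a compact limit, and therefore diffeomorphic to $M$ for large $k$; we rule this case out together with the sphere. In both cases $R(x_k,t_k)\operatorname{diam}_{g(t_k)}(M)^2$ stays bounded, and $(M,g_k(0))$ is globally close to $N$. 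For the sphere case, $g(t_k)$ would be nearly round after rescaling. Huisken-type pinching, or preservation of near-roundness under the flow via the compactness of the family, would then force $M$ to be almost round at all later times. More concretely, the rescaled flows $g_k$ on $[-T,0]$ would converge globally to the shrinking sphere on $[-T,0]$ for every $T$. Picking $T$ with $t_k-T/R(x_k,t_k)$ still $\to-\infty$ conflicts with the asymptotic shrinker being a cylinder: the asymptotic shrinker at the scale $\tau$ near $-t_k$ must look cylindrical, but the whole manifold is almost round. To make this rigorous, I would use Lemma~\ref{lem:large-diameter-times-curvature} along the times $s_k:=t_k$. It yields $q_k$ with $R(q_k,t_k)\operatorname{diam}_{g(t_k)}(M)^2\to\infty$. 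Global smooth closeness of $(M,g_k(0))$ to a compact $N$ gives $R(q_k,t_k)\le C R(x_k,t_k)$ and $R(x_k,t_k)\operatorname{diam}^2\le C$, a contradiction. This step is the main obstacle; it rests on the fact that Cheeger--Gromov convergence to a compact limit is global and yields uniform two-sided curvature comparison across all of $M$.

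\emph{Step 2: identify $N$.} Since $N$ is noncompact with asymptotic shrinker $\sph^{n-1}\times\R$, Corollary~\ref{cor:classification-noncompact-kappa} applies. It says $(N,g_\infty(t))_{t\le0}$ is the shrinking cylinder or the Bryant soliton. If the corollary requires the limit as an eternal flow, we only need the ancient part $t\le0$, which is what the corollary treats. The normalization $R_{g_\infty(0)}(x_\infty)=1$ is inherited from $R_{g_k(0)}(x_k)=1$. I would also note that the positivity of curvature of $M$ is not needed beyond supplying the hypotheses of Lemma~\ref{lem:compactness-result}, since the corollary handles both the split and non-split limits.
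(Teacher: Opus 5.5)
Your proposal is correct and follows essentially the same route as the paper. You rule out a compact limit by combining the global bounds $\operatorname{diam}_{g_k(0)}(M)\le D$ and $\sup R_{g_k(0)}\le C$, which come from convergence to a compact $N$, with the blow-up of $R(q_k,t_k)\operatorname{diam}_{g(t_k)}(M)^2$ from Lemma~\ref{lem:large-diameter-times-curvature}, and you then invoke Corollary~\ref{cor:classification-noncompact-kappa}. The heuristic detour in Step 1 about Huisken-type pinching and near-roundness is unnecessary and can be dropped, since your final argument handles every compact limit uniformly.
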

\begin{proof}
We may use Lemma~\ref{lem:compactness-result} to obtain the existence of a $\kappa$-solution $(N^n,g_\infty(t))$ such that (i) either $N^n$ is compact and is the shrinking sphere $\sph^n$, or (ii) the asymptotic shrinker of $N^n$ is $\sph^{n-1} \times \R$. 

We first show that $N^n$ is noncompact. Suppose for contradiction that $N^n$ is compact. Then the pointed convergence is global at time $t=0$, and hence the entire rescaled manifolds $(M,g_k(0))$ converge smoothly to $(N,g_\infty(0))$. In particular, there exist constants $D,C<\infty$ such that for all sufficiently large $k$, $\operatorname{diam}_{g_k(0)}(M)\le D$ and
$\sup_M R_{g_k(0)}\le C.$ It follows that
\[
R(x_k,t_k)\,\operatorname{diam}_{g(t_k)}(M)^2=\operatorname{diam}_{g_k(0)}(M)^2\le D^2
\]
for all sufficiently large $k$. Now let $q_k\in M$ be any sequence. We obtain 
$$R(q_k,t_k)=R(x_k,t_k)\,R_{g_k(0)}(q_k)\le C\,R(x_k,t_k).$$
Therefore
\[
R(q_k,t_k)\,\operatorname{diam}_{g(t_k)}(M)^2
\le
C\,R(x_k,t_k)\,\operatorname{diam}_{g(t_k)}(M)^2
\le
CD^2
\]
for all sufficiently large $k$. This contradicts Lemma~\ref{lem:large-diameter-times-curvature}. Hence $N^n$ must be noncompact.

Therefore case (i) is impossible, and we are in case (ii). Since $N^n$ is noncompact, Corollary~\ref{cor:classification-noncompact-kappa} applies and shows that $(N^n,g_\infty(t))$ is isometric either to the family of shrinking round cylinders $\sph^{n-1}\times\R$ or to the Bryant soliton $\Bry^n$.
\end{proof}
We now complete the proof of Theorem~\ref{thm:main-result} and Theorem~\ref{thm:full-classification} in the compact case. 
\begin{theorem}
\label{classification-compact-kappa}
Let $(M^n,g(t))_{t\leq 0},n\geq 4$ be a compact $\kappa$-solution with asymptotic shrinker $\sph^{n-1}\times \R$. Then $(M^{n},g(t))_{t\leq 0}$ is isometric to the family of ancient solutions constructed by Perelman. 
\end{theorem}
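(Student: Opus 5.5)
The plan is to show that for some $b<0$ the flow $(M,g(t))_{t\le b}$ is uniformly PIC, then invoke \cite{BDSN23}. Two reductions come first. The solution has positive curvature operator: by Lemma~\ref{lem:strong-max-principle-line-or-positive-curvature} the only alternative is the round cylinder, which is noncompact. Hamilton's theorem then gives that $M$ is diffeomorphic to a spherical space form. Passing to the universal cover, which is compact with finite fundamental group, Lemma~\ref{lem:asymptotic-shrinker-of-universal-cover} shows that the lifted flow still has asymptotic shrinker $\sph^{n-1}\times\R$. Perelman's asymptotic shrinker of a nontrivial quotient would be a quotient cylinder rather than $\sph^{n-1}\times\R$, so the same argument as at the end of Lemma~\ref{lem:strong-max-principle-line-or-positive-curvature} lets me reduce to $M\cong\sph^n$.

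Next I will argue by contradiction, exactly as in Theorem~\ref{thm:noncompact-kappa-has-PIC}, except that the explicit Bryant-like base times $\hat t_k$ are now supplied by Lemma~\ref{lem:dim-red-compact}. Suppose the flow is not uniformly $\alpha$-PIC on any $(-\infty,b]$, with $\alpha<\theta_n c_n/100$. The key claim is that there exists $T_0$ such that $(M,g(t))$ is uniformly $\theta_n/2$-PIC for every $t\le T_0$. This claim contradicts the supposition directly, so no continuity argument in time is needed.

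To prove the claim, suppose it fails. Then there are $t_k\to-\infty$ and $x_k\in M$ at which the $\theta_n/2$-PIC pinching inequality fails for $g(t_k)$. Rescale around $(x_k,t_k)$ and apply Lemma~\ref{lem:dim-red-compact}. After passing to a subsequence, the rescaled flows converge smoothly, pointed at $x_k$, to either the shrinking cylinder or the Bryant soliton, normalized so that $R(x_\infty,0)=1$. Both limits are uniformly $\theta_n$-PIC by Lemma~\ref{lem:uniform-theta}. By property (iii) (stability under closeness) applied on a ball of radius $1$, the point $x_k$ satisfies the $\theta_n/2$-PIC pinching inequality for large $k$, which is a contradiction. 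Scale invariance (property (i)) is what allows the rescaling. The argument is pointwise, so no cap-diameter control is needed: every sequence of points at times tending to $-\infty$ subconverges to a uniformly $\theta_n$-PIC model. This gives $b=T_0$ and $\Lambda=\theta_n/2$ directly, so the contradiction set-up above can be dropped altogether.

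Finally, I will apply \cite{BDSN23}. The flow $(M,g(t))_{t\le b}$ is a compact uniformly PIC $\kappa$-solution on $\sph^n$, and nonnegative curvature operator gives weakly PIC2. After a time translation, $\tilde g(t)=g(t+b)$ is rotationally symmetric and is either the round shrinking sphere or Perelman's solution. The round sphere is excluded because its asymptotic shrinker is $\sph^n$. Backward uniqueness \cite{Kot10}, or simply uniqueness of the forward flow from $g(b)$ on a compact manifold, extends the identification to all $t\le 0$. The main obstacle I expect is the topological reduction to $\sph^n$ and, in particular, confirming that \cite{BDSN23} applies to the ancient solution restricted to $(-\infty,b]$. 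The dimension-reduction step itself is routine given Lemma~\ref{lem:dim-red-compact}.
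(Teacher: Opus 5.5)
Your pointwise dimension-reduction argument for uniform PIC on $t\le T_0$ and your appeal to \cite{BDSN23} match what the paper does, so those steps are fine. The paper bounds the best pinching constant $\Lambda(t)$ from below using Lemma~\ref{lem:dim-red-compact}, which is your argument. The gap is the preliminary reduction to $M\cong\sph^n$.

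The argument at the end of Lemma~\ref{lem:strong-max-principle-line-or-positive-curvature} works only because the universal cover there is already known to be the shrinking cylinder. The quotient flow is then itself self-similar, so it is visibly its own asymptotic shrinker. For an as-yet-unidentified compact $\widetilde M$ with nontrivial deck group $\Gamma$, nothing forces the asymptotic shrinker of $\widetilde M/\Gamma$ to be a nontrivial quotient of $\sph^{n-1}\times\R$. The shrinker of $M$ is the limit of $(M,\tau_i^{-1}g(\tau_i t),q_i)$. It sees $\Gamma$ only through those $\gamma$ for which $\tau_i^{-1/2}d_{\tilde g(-\tau_i)}(\tilde q_i,\gamma\tilde q_i)$ stays bounded. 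If every nontrivial $\gamma$ pushed the lifted basepoints off to infinity at this scale, the limits for $M$ and $\widetilde M$ would coincide and you would get no contradiction. Excluding this needs genuine information that your proposal does not supply:
\begin{itemize}
\item where the reduced-distance centers sit relative to the $\Gamma$-action;
\item that the limiting action on the cylinder is nontrivial and free.
\end{itemize}
You flagged this step yourself, and the justification you offer for it does not transfer.

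The repair is to postpone the quotient issue, as the paper does.
\begin{itemize}
\item Lemma~\ref{lem:dim-red-compact} does not require simple connectivity, so your argument shows directly that $(M,g(t))_{t\le T_0}$ is uniformly $\theta_n/2$-PIC.
\item This lifts to $\widetilde M$, which is compact by Myers. It is diffeomorphic to $\sph^n$ by \cite{BW08}; Hamilton's theorem covers only $n=4$ here.
\item Then \cite{BDSN23}, together with Lemma~\ref{lem:asymptotic-shrinker-of-universal-cover} to exclude the round sphere, identifies $(\widetilde M,\tilde g(t))$ as Perelman's solution.
\item Only now handle $\Gamma$. Perelman's solution has exactly two tips, and $\Gamma$ permutes them. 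The kernel of this action fixes points, so it is trivial by freeness, and $\Gamma$ embeds in $\mathbb Z_2$.
\item For the explicit tip-swapping involution of this known model, the quotient's asymptotic shrinker is $(\sph^{n-1}\times\R)/\mathbb Z_2$. That rules the quotient out, so $M$ is Perelman's solution.
\end{itemize}
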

\begin{proof}
Let $\pi:\widetilde M\to M$ be the universal cover, let $\Gamma$ denote the group of deck transformations, and let $\tilde g(t)=\pi^*g(t)$. By Lemma~\ref{lem:asymptotic-shrinker-of-universal-cover}, $(\widetilde M,\tilde g(t))$ is a $\kappa$-solution with asymptotic shrinker $\sph^{n-1}\times \R$. By Lemma~\ref{lem:strong-max-principle-line-or-positive-curvature}, either $(\widetilde M,\tilde g(t))$ has positive curvature operator, or it is isometric to the round cylinder $\sph^{n-1}\times \R$. 

We claim that $\widetilde M$ is compact. Suppose not. Then $(\widetilde M,\tilde g(t))_{t\le 0}$ is a noncompact $\kappa$-solution with asymptotic shrinker $\sph^{n-1}\times \R$. By Corollary~\ref{cor:classification-noncompact-kappa}, $(\widetilde M,\tilde g(t))$ is isometric either to (i) the round cylinder $\sph^{n-1}\times \R$ or to (ii) the Bryant soliton. In the first case, $(M,g(t))$ is a nontrivial quotient of $\sph^{n-1}\times \R$. But then the asymptotic shrinker of $(M,g(t))$ is the corresponding quotient cylinder, not the round cylinder $\sph^{n-1}\times \R$, contradicting the assumption. In the second case, when $(\widetilde M,\tilde g(t))$ is the Bryant soliton, every element of $\G$ fixes the tip $o\in \widetilde M$, since the tip is the unique point where the scalar curvature attains its maximum. Since $\G$ acts freely, it follows that $\G$ is trivial. In particular, $\pi$ is a diffeomorphism and $M$ is noncompact, a contradiction. Thus $\widetilde M$ must be compact. 

Therefore $(\widetilde M,\tilde g(t))$ has positive curvature operator and is compact. 
By \cite{BW08}, $\widetilde M^n$ is diffeomorphic to $\sph^n$. For each $t\leq 0$, define $\Lambda(t)$ to be the largest constant such that  $(\widetilde M,\tilde g(t))$ is uniformly $\Lambda(t)$-PIC. This is well-defined and positive because $\widetilde M^n$ is compact and $\tilde g(t)$ has positive curvature operator. 

We claim that $\inf_{t\leq 0}\Lambda(t)>0$. If not, there exists $t_k\to -\infty$ such that $\Lambda(t_k)\to 0$. This implies that there exists $p_k\in \widetilde M^n$ and an orthonormal 4-frame $\{e_1,e_2,e_3,e_4\}\sub T_{p_k}\widetilde M$ such that at $(p_k,t_k)$ one has
$$R_{1313}+R_{1414}+R_{2323}+R_{2424}-2 R_{1234} \leq 4 (2\Lambda(t_k)) |\operatorname{Rm}|.$$
Dividing both sides by $R(p_k,t_k)$, and passing to a subsequential limit using Lemma~\ref{lem:dim-red-compact}, we obtain a point $p$ on either $\Bry^n$ or the round cylinder at which $R_{1313}+R_{1414}+R_{2323}+R_{2424}-2 R_{1234}=0$. This is impossible as both the cylinder and the Bryant soliton are uniformly PIC. Thus, there exists $\Lambda>0$ such that the flow $(\widetilde M^n,\tilde g(t))_{t\leq 0}$ is uniformly $\Lambda$-PIC. Combined with positive curvature operator, which implies weakly PIC2, \cite{BDSN23} implies that $(\widetilde M^n,\tilde g(t))_{t\leq 0}$ is isometric either to a family of shrinking round spheres or to Perelman's ancient solution. Since $(\widetilde M^n,\tilde g(t))_{t\leq 0}$ has $\sph^{n-1}\times\R$ as an asymptotic shrinker, the shrinking-sphere case is impossible. Hence $(\widetilde M^n,\tilde g(t))_{t\leq 0}$ is isometric to Perelman's ancient solution.

Since Perelman's ancient solution has exactly two tips, every element of $\Gamma$ preserves the set of tips. Because $\Gamma$ acts freely, either $\Gamma$ is trivial, or $\Gamma\cong \mathbb Z_2$. In the latter case, $(M^n,g(t))_{t\leq 0}$ is the $\mathbb Z_2$-quotient of Perelman's ancient solution, and hence its asymptotic shrinker is $(\sph^{n-1}\times \R)/\mathbb Z_2$. Therefore $\Gamma$ is trivial, and hence $(M^n,g(t))_{t\leq 0}$ is isometric to Perelman's ancient solution.
\end{proof}

\end{document}